\definecolor{gab}{RGB}{160, 48, 222}
\newcommand{\R}{\mathbb{R}}
\newcommand{\N}{\mathbb{N}}
\newcommand{\ph}{\varphi}
\newcommand{\eps}{\varepsilon}
\newtheorem{theorem}{Theorem}[section]
\newtheorem{lemma}[theorem]{Lemma}
\newtheorem{proposition}[theorem]{Proposition}
\newtheorem{remark}[theorem]{Remark}
\numberwithin{equation}{section}
\begin{document}
	\title[Non-degeneracy ]{Non-degeneracy of the bubble in a fractional and singular 1D Liouville equation}

\author{Azahara DelaTorre}
\address{Azahara DelaTorre, Dipartimento Matematica Guido Castelnuovo, Sapienza Universit\`a di Roma, Piazzale Aldo Moro 5, 00185 Roma (Italy)}
\email{azahara.delatorrepedraza@uniroma1.it}

\author{Gabriele Mancini}
\address{Gabriele Mancini, Dipartimento di Matematica, Università degli Studi di Bari Aldo Moro, Via Orabona 4, 70125 Bari (Italy)}
\email{gabriele.mancini@uniba.it}

 \author{Angela Pistoia}
\address{Angela Pistoia, Dipartimento di Scienze di Base e Applicate per l'Ingegneria, Sapienza Universit\`a di Roma, Via Antonio Scarpa 10, 00161 Roma (Italy)}
\email{angela.pistoia@uniroma1.it}

\author{Luigi Provenzano}
\address{Luigi Provenzano, Dipartimento di Scienze di Base e Applicate per l'Ingegneria, Sapienza Universit\`a di Roma, Via Antonio Scarpa 10, 00161 Roma (Italy)}
\email{luigi.provenzano@uniroma1.it}

\begin{abstract}
We prove the non-degeneracy of solutions to a fractional and singular Liouville equation defined on the whole real line in presence of a singular term. We use conformal transformations to rewrite the linearized equation as a Steklov eigenvalue problem posed in a bounded domain, which is defined either by an intersection or a union of two disks. We conclude by proving the simplicity of the corresponding eigenvalue.
\end{abstract}
\date\today
\subjclass{ 35R11 (35B33, 45G05)}
\keywords{Non-degeneracy, Liouville equation, Steklov problem}

\maketitle
\section{Introduction}
In this work we investigate non-degeneracy properties { of} solutions to the one-dimensional singular Liouville equation
\begin{equation}\label{eq:main}
	(-\Delta)^{\frac12} u =|x|^{\alpha-1}e^u \ \hbox{in}\ \mathbb R,
\end{equation}
with $0<\alpha <2$. In order to define the half-Laplacian in \eqref{eq:main}, we require  
\begin{equation}\label{L12}
\int_{\R} \frac{|u|}{1+x^2} <+\infty.
\end{equation}
We also assume the integrability condition 
\begin{equation}\label{CondInt}
\int_{\R} |x|^{\alpha-1} e^u < + \infty.
\end{equation}
Under conditions \eqref{L12} and \eqref{CondInt}, weak solutions to \eqref{eq:main}  are completely classified. When $\alpha = 1$, the set of solutions contains only the two-parameter family of solutions 
\begin{equation}\label{bub-non}
\mathfrak{u}_{\mu,\xi}(x)=\ln\left(\frac{2 \mu}{|x-\xi|^{2}+\mu^2}\right),
\end{equation}
with $\xi \in \R$ and $\mu>0$. We refer to the work of Da Lio, Martinazzi and Rivière in \cite{DMR15} for the proof. Due to translation and dilation invariance, it is clear that the derivatives
\begin{equation} \label{Zdefs}
{z}_{0, \mu, \xi}(x) := \partial_\mu \mathfrak{u}_{\mu, \xi}(x) = \frac{1}{\mu} \frac{(x - \xi)^2- \mu^2}{\mu^2 + (x - \xi)^2}, \quad z_{1, \mu, \xi}(x) := \partial_\xi \mathfrak{u}_{\mu, \xi}(x) = \frac{2 (x - \xi)}{\mu^2 + (x - \xi)^2}
\end{equation}
solve the linear problem 
\begin{equation}\label{eq:LinReg}
(-\Delta)^\frac{1}{2} z = e^{\mathfrak{u}_{\mu,\xi}}z \quad \text{ in }\R. 
\end{equation}
It is well known (see \cite{DdM05,S19,CF22}) that the bubble $\mathfrak{u}_{\mu, \xi}$ is non-degenerate up to the natural invariances of \eqref{eq:main}, i.e. the two functions in \eqref{Zdefs} span the space of all bounded solutions to \eqref{eq:LinReg}. More precisely, if $z \in L^\infty(\mathbb R)$ is a weak solution to  \eqref{eq:LinReg}, then $z$ is a linear combination of $z_{0, \mu, \xi}$ and $z_{1, \mu, \xi}$.

\medskip
If $\alpha \neq 1$, problem \eqref{eq:main} is not translation invariant. As we will show in Section \ref{Sec:prel}, it follows from the results  obtained by Gálvez, Jiménez and Mira in \cite{gjm} (see also \cite{ZZ18}) that for any $\alpha \in (0,1)\cup (1,2)$, equation \eqref{eq:main} only has a one-parameter family of solutions given by:   
\begin{equation}\label{bub-sing}u_\rho(x)= \ln\left(\frac{2\alpha \rho\sin\frac{\pi\alpha}2}{|x|^{2\alpha}+2\rho|x|^\alpha \cos\frac{\pi\alpha}{2}+\rho^2 }\right)
\end{equation}
with $\rho >0$.  We stress that the condition $\alpha\in (0,1)\cup (1,2)$ is necessary, since there exists no solution to \eqref{eq:main} when $\alpha \ge 2$ (see Proposition \ref{ClassificationU}).

In the present work we prove the non-degeneracy of $u_\rho$. Specifically, {for any $\alpha\in (0,1)\cup (1,2)$ }and $\rho>0$, we classify all solutions to the linearized problem \begin{equation}\label{eq:lin}
	(-\Delta)^{\frac12} \varphi =|x|^{\alpha-1}e^{u_\rho}\varphi \ \hbox{in}\ \mathbb \R,
\end{equation}
in the space of functions satisfying the conditions 
\begin{equation}\label{cond:phi}
	(-\Delta)^\frac{1}{4}\ph \in L^2(\R) \quad \text{ and } \quad  \int\limits_{ \mathbb R } |x|^{\alpha-1}e^{u_\rho } \varphi^2<+\infty.
\end{equation}
We consider the function 
\begin{equation}\label{z1d}
	z_{\rho} (x) := \partial_\rho u_\rho(x)  =\frac{1}{\rho} \, \frac{|x|^{2\alpha}-\rho^2}{|x|^{2\alpha}+2\rho |x|^{\alpha}\cos \frac{\pi \alpha}{2} + \rho^2},
\end{equation}
and give the following result:

\medskip
\begin{theorem}\label{Thm:1d} { Let $\alpha \in (0,1)\cup (1,2)$ and $\rho>0$. The function $u_{\rho}$ defined as in \eqref{bub-sing} is non degenerate.} That is,
if $\ph$ is a weak solution to \eqref{eq:lin} such that  \eqref{cond:phi} holds, then there exists $c\in \R$ such that $\ph = c \, z_{\rho}$, where $z_{\rho}$ is defined as in \eqref{z1d}. 
\end{theorem}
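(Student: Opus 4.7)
The plan is to follow the route sketched in the abstract: encode \eqref{eq:lin} in its Caffarelli--Silvestre harmonic extension, apply a conformal map that turns the weighted boundary condition into a pure Steklov eigenvalue condition on a bounded planar domain, and then prove the simplicity of the eigenvalue $\lambda=1$ on that domain.

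\textbf{From the linearized equation to a Steklov problem.} Any $\ph$ satisfying \eqref{eq:lin}--\eqref{cond:phi} lifts via harmonic extension to $\Phi\in H^1(\R^2_+)$ with trace $\ph$, solving $\Delta\Phi=0$ in $\R^2_+$ and $-\partial_\nu\Phi=|x|^{\alpha-1}e^{u_\rho}\ph$ on $\partial\R^2_+$. The denominator appearing in \eqref{bub-sing} factors as $\bigl|\,|x|^{\alpha}+\rho e^{i\pi\alpha/2}\bigr|^{2}$, which makes it natural to apply the conformal change $w=z^{\alpha}$ (principal branch on $\R^2_+$) sending $\R^2_+$ to the sector $S_\alpha=\{0<\arg w<\alpha\pi\}$ and straightening the weight into a smooth, point-pole Poisson-type density. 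A further Möbius map sends $S_\alpha$ to a bounded symmetric domain $\Omega_\alpha$: the intersection of two disks when $\alpha\in(0,1)$, and the union of two disks when $\alpha\in(1,2)$. By the conformal covariance of the $1$D half-Laplacian and its Neumann trace, the problem becomes
\[
\Delta\widetilde{\Phi}=0 \ \text{ in }\ \Omega_\alpha,\qquad \partial_\nu\widetilde{\Phi}=\widetilde{\Phi}\ \text{ on }\ \partial\Omega_\alpha,
\]
i.e.\ a pure Steklov eigenvalue problem with $\lambda=1$. A direct inspection identifies $z_\rho$ with the pull-back of an eigenfunction $\widetilde{Z}_\rho$ that is \emph{even} with respect to the reflection symmetry of $\Omega_\alpha$ across the axis through its two corner points.

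\textbf{Simplicity of the Steklov eigenvalue $\lambda=1$.} Theorem~\ref{Thm:1d} is equivalent to showing that the eigenspace at $\lambda=1$ is one-dimensional and spanned by $\widetilde{Z}_\rho$. The reflection symmetry of $\Omega_\alpha$ splits this eigenspace into an even and an odd component. In bipolar (or equivalently separated) coordinates adapted to the two corner points, harmonic functions in $\Omega_\alpha$ separate variables and the Steklov condition reduces to a one-parameter family of $1$D eigenvalue problems indexed by the Fourier mode along the bipolar angle. In the even sector the problem is solved explicitly and $\widetilde{Z}_\rho$ is seen to be the unique eigenfunction at $\lambda=1$; in the odd sector the corresponding reduced problem must be shown to have no eigenvalue equal to $1$, which gives the claim.

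\textbf{Main obstacle.} The hardest point is ruling out odd eigenfunctions at $\lambda=1$ in the regime $\alpha\in(1,2)$. As $\alpha\to 2^-$ the domain $\Omega_\alpha$ degenerates (the two disks become internally tangent, $S_\alpha$ collapses onto $\C$ minus a slit), the Steklov spectrum accumulates, and in particular odd eigenvalues are expected to approach $1$. This degeneration is precisely what forces the threshold $\alpha_0<2$ in the statement, and a quantitative spectral bound on the reduced $1$D problem, uniform for $\alpha\in(1,\alpha_0)$, is the delicate estimate that the proof must actually deliver. For $\alpha\in(0,1)$ the geometry of $\Omega_\alpha$ (a lens with corner angle $\alpha\pi<\pi$) is mild enough that the corresponding $1$D problem can be treated by a direct monotonicity/comparison argument.
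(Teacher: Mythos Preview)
Your reduction is right and matches the paper: harmonic extension, then $z\mapsto z^\alpha$ onto a sector, then a M\"obius map onto the bounded symmetric domain $\Omega_\alpha$ (intersection of two disks for $\alpha\in(0,1)$, union for $\alpha\in(1,2)$), turning \eqref{eq:lin} into a Steklov problem with a fixed eigenvalue $\mu_\alpha$; the eigenfunction corresponding to $z_\rho$ is $\psi(x,y)=x$.

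The gap is in your simplicity argument. The Steklov problem on $\Omega_\alpha$ does \emph{not} separate in bipolar coordinates. In coordinates $(\sigma,\eta)$ with foci at the two corner points, the arcs of $\partial\Omega_\alpha$ are $\eta=\pm\eta_0$, harmonic functions do satisfy $u_{\sigma\sigma}+u_{\eta\eta}=0$, but the Steklov condition becomes $\partial_\eta u=\mu\,h(\sigma,\eta_0)\,u$ with the bipolar scale factor $h=a/(\cosh\sigma-\cos\eta_0)$ depending on $\sigma$. So the boundary condition does not decouple into a one-parameter family of $1$D problems, and neither the ``explicit solution in the even sector'' nor the ``monotonicity/comparison argument'' you invoke has a concrete realization. (The same obstruction appears if you try to separate on the sector $S_\alpha$ itself: the boundary weight $e^{W_\alpha}$ is a Poisson-type density, not a pure power.)

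The paper proves simplicity by a completely different, spectral-geometric argument. For $\alpha\in(0,1)$ it shows $\mu_\alpha=\Lambda_2$ exactly, using the Kuttler--Sigillito lower bound $\Lambda_2\ge 1/\max\{g\sqrt{1+g'^2},f\sqrt{1+f'^2}\}$ for doubly symmetric domains; then, since a second Steklov eigenfunction has its nodal line on an axis of symmetry, it eliminates the ``odd in $y$'' option by testing the mixed problem on a quarter-domain against $v(x,y)=y$ (the boundary identity $\int(v\partial_\nu\varphi-\varphi\partial_\nu v)=0$ forces $\tau_\alpha=0$), and eliminates a second ``odd in $x$'' eigenfunction by orthogonality to $\psi=x$. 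For $\alpha\in(1,2)$ the isoperimetric bound $\Lambda_2\le 2\pi/L<\mu_\alpha$ shows $\mu_\alpha$ is \emph{not} second; a Kuttler--Sigillito lower bound for $\Lambda_{4}$ in starshaped domains then gives $\mu_\alpha<\Lambda_4$ provided $\tau_\alpha$ is small, hence $\mu_\alpha=\Lambda_3$ is automatically simple. This is where $\alpha_0$ comes from---not from a degeneration of your hypothetical odd $1$D problem, but from the explicit range of validity of the inequality $\mu_\alpha<\Lambda_4$.
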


The main idea of the proof consists in proving the equivalence between the {\em non-local} eigenvalue problem  
 \begin{equation}\label{eig1}
		(-\Delta)^{\frac12} \varphi =\lambda |x|^{\alpha-1}e^{u_\rho}\varphi \ \hbox{in}\ \mathbb \R,
	\end{equation}
\medskip
and the {\em Steklov} eigenvalue problem
\begin{equation}\label{eig2}
\Delta \psi  =0 \text{ in } \Omega_\alpha,\; \partial_\nu \psi= \mu \psi  \text{ in } \partial\Omega_\alpha, \end{equation}

\noindent
where $\Omega_\alpha$ is either the intersection of two disks, when $\alpha\in(0,1)$, or the union of two disks, when $\alpha\in(1,2)$.
We will prove  that  the eigenvalue 
$\lambda=1$ of \eqref{eig1} corresponds to the eigenvalue $\mu_\alpha=\frac1{\sqrt{1+\tau_\alpha^2}}$ of \eqref{eig2}, being
$\tau_\alpha:=\frac{1+\cos\alpha\pi}{\sin\alpha\pi}$ and it is {always} simple {when $\alpha \in (0,1)\cup (1,2)$}.  It is worthwhile to point out that  {$\mu_\alpha$} is the {second} eigenvalue {of \eqref{eig2}} if  $\alpha\in(0,1),$ while it is the third eigenvalue when $\alpha\in(1,2).$ 
As a consequence, the Morse index of the bubble $u_\rho$ changes when $\alpha$ crosses the value  1. Indeed, it turns out to be equal to 1 when $\alpha\in(0,1),$  while it equals 2 when $\alpha\in(1,2)$.
\\

  The  proof of Theorem \ref{Thm:1d} is based on harmonic extension techniques (see \cite{CS}). Via convolution with the Poisson Kernel, every function satisfying \eqref{L12} can be extended  to a harmonic function defined on the upper half-plane $\R^2_+:=\{(x,y)\in \R^2\,:\; y>0\}$. It is simple to verify that the harmonic extensions of  \eqref{bub-non} and \eqref{bub-sing} are given respectively by 
\begin{equation*}%\label{bubblereg}
\mathcal U_{\mu,\xi}(x,y):=\ln\left(\frac{2\alpha \mu}{(x-\xi)^2+(y+\mu)^2}\right)
\end{equation*}
and by
\begin{equation}\label{bubble}
U_\rho(x,y):=\ln\frac{2\alpha \rho|\sin\theta_0|}{|z^\alpha-z_0|^2},\ z = x + i y, \ z_0=\rho e^{i\theta_0},\ \theta_0:= \frac{\pi\alpha}2+\pi.%,\ \rho>0.
\end{equation}
These functions solve the local problem
\begin{equation}\label{eq:ExtU}
-\Delta  U  = 0\ \hbox{in}\ \mathbb \R^2_+,\ 
\partial _\nu U= |x|^{\alpha-1} e^U \ \hbox{on}\ \partial \mathbb R^2_+,
\end{equation}
respectively for $\alpha =1$ and $\alpha\in (0,1)\cup(1,2)$, where $\nu$ is the outward normal to the half-plane $\partial \mathbb R^2_+$.  Similarly, if $\ph$ solves the \eqref{eq:lin}-\eqref{cond:phi}, then the harmonic extension $\Phi$ of $\ph$ satisfies
\begin{equation}\label{Lin}
-\Delta  \Phi  = 0\ \hbox{in}\ \mathbb R^2_+,\ 
\partial _\nu \Phi = |x|^{\alpha-1} e^U \Phi \ \hbox{on}\ \partial \mathbb R^2_+,
\end{equation}
as well as
\begin{equation}\label{weak}\int\limits_{  \mathbb R^2_+}|\nabla\Phi|^2+\int\limits_{\R^2_+} |z|^{2(\alpha-1)} e^{2U} \Phi^2\, dz + \int\limits_{\partial \mathbb R^2_+} |x|^{\alpha-1}e^U  \Phi^2<+\infty.\end{equation}

\begin{theorem}\label{main}
{For any $\alpha \in (0,1)\cup (1,2)$ and $\rho>0$, the function $U_\rho$ be defined as in \eqref{bubble} is non-degenerate.} Namely, each solution to the linear problem \eqref{Lin} satisfying \eqref{weak} is of the form 
\begin{equation}\label{eq:Phi_cond}
	\Phi(z)=\mathfrak{c} \frac{\partial U_\rho}{\partial\rho}(z),\ \mathfrak c\in\mathbb R ,\ \hbox{with}\ \frac{\partial U_\rho}{\partial\rho}(z)=\frac1\rho\frac{ |z|^{2\alpha}-|z_0|^2}{|z^\alpha - z_0|^2}.\end{equation}
\end{theorem}

In \cite{DdM05}, Dávila, del Pino and Musso studied  problem \eqref{Lin} with $\alpha=1$, and proved that it  is  equivalent to the study of the first nontrivial Steklov eigenspace for the unit disk $\mathscr{D}\subseteq \R^2$ (they use the fact that the half-plane is conformally equivalent to  $\mathscr{D}$). In  \cite{S19,CF22} Santra as well as Cozzi and Fernández directly attacked problem \eqref{eq:lin} and, using the stereographic projection of the real line on  $\mathbb S^1$, they wrote problem \eqref{eq:lin} with $\alpha =1$ as an eigenvalue problem of the fractional Laplacian on $\mathbb S^1.$  Neither of the approaches can be followed if $\alpha \neq 1$ because of the presence of the  non-autonomous term $|x|^{\alpha-1}$.
In the present paper, we find a clever change of variables which allows us to get rid of this term and to reduce the linear problem \eqref{Lin}  to a classical Steklov eigenvalue problem defined on a Lipschitz continuous bounded domain in the plane. 
 More precisely, we proceed as follows. First, using a conformal change of variables, we rewrite \eqref{Lin} on a cone (see \eqref{lin2}) so that the boundary condition does not contain the non-autonomous term anymore. Then, using a conformal M\"obius map, we rewrite \eqref{Lin}  as the Steklov eigenvalue problem (see \eqref{eig2}) with $\mu = \mu_\alpha$. The proof of Theorem \ref{main} is concluded provided that $\mu_\alpha$ is a simple eigenvalue. \\

The simplicity of the eigenvalue $\mu_{\alpha}$ for all $\alpha\in(0,1)\cup(1,2)$ is a delicate issue. By homothety, it is equivalent to the simplicity of the eigenvalue $1$ on all possible intersections ($\alpha\in(0,1)$) and unions ($\alpha\in(1,2)$) of two non-disjoint unit disks. In an appropriate coordinate system, in which the centers of the disks are symmetric and lie on the $y$ axis (see Figures \ref{figM1} and \ref{fig0}), the coordinate function $x$ is always an eigenfunction with eigenvalue $1$. These domains have two axes of symmetry, hence each eigenspace is spanned by even or odd functions with respect to each axis. Moreover, any second eigenfunction has exactly two nodal domains. From these symmetry considerations it follows that $x$ is a valid candidate for a second eigenfunction. This is in fact the case for the intersection of disks, where one expects that the nodal line of a second eigenfunction is the shortest possible. The rigorous proof requires a precise estimation of the eigenvalues associated with  eigenfunctions which are odd in $y$, which turn out to be strictly greater than $1$, ruling out also the possibility that $1$ is multiple. This is done by applying a Rellich-Pohozaev identity which allows to relate the Steklov eigenvalues with the eigenvalues of the  Dirichlet Laplacian on a part of the boundary, or, equivalently, on a segment. As for the union of disks, $1$ is no more the second eigenvalue by Weinstock's inequality (this is expected, now $x$ has not the shortest nodal line). Hence it is at least the third. This case is trickier: when the union of disks is close to the unit disk, by spectral stability $1$ is the third eigenvalue and it is simple. We need to prove that it can never be multiple. Again, we use a Rellich-Pohozaev identity to rule out the possibility of having third eigenfunctions with at least three nodal domains by estimating from below the corresponding eigenvalues. Note that the strict relation between the Steklov eigenvalues of a domain and the Laplace-Beltrami eigenvalues of its boundary through Rellich-Pohozaev identities has been object of a quite intense analysis, see e.g., \cite{PS}. In our specific situation, the boundary is one-dimensional, hence we get very precise information on Steklov eigenvalues associated with eigenfunctions satisfying certain symmetries.

\medskip

It is interesting to compare Theorem \ref{Thm:1d} with similar results in higher dimension. Equation \eqref{eq:main} is a one-dimensional analog of the celebrated Liouville equation 
\begin{equation}\label{Lio2d}
-\Delta u =  |x|^{2(\alpha-1)} e^{2u},
\end{equation}
which was introduced by Liouville \cite{Liou} with $\alpha =1$. Solutions to \eqref{Lio2d} with $|x|^{2(\alpha-1)}e^{2u}\in L^1(\R^2)$ were classified by Chen and Li \cite{ChenLi91} for $\alpha =1$, and by Prajapat and Tarantello  \cite{PrTar01} for a general $\alpha >0$. Non-degeneracy of solutions was proved by Baraket and Pacard in \cite{BarPac98} for $\alpha =1$, Esposito in \cite{Esposito05} for $\alpha \in (0,+\infty) \setminus \N$ and  del Pino, Esposito and Musso in \cite{DelPinoEspsitoMusso}, for $\alpha\in\N\setminus\{0\}$. We also quote the paper \cite{ggn}, where Gladiali, Grossi and Neves 
studied the Morse index of the solution of \eqref{Lio2d} showing that it changes  and increases whenever $\alpha$ crosses an integer value.  In recent years, Liouville equations have been studied also in dimension $n\ge 3$ in connection to problems involving higher order notions of curvature such as prescribed $Q$-curvature or prescribed fractional curvature problems (see e.g. \cite{DIEHyder,IMRNHMM,DGHM}). In particular in \cite{IMRNHMM}, Hyder, Mancini and Martinazzi consider the problem 
\begin{equation}\label{Eq:singDimn}
(-\Delta)^\frac{n}{2} u = |x|^{n(\alpha-1)} e^{n u} \quad \text{ in }\R^n
\end{equation}
with
$$
\int_{\R^n}|x|^{n(\alpha-1)} e^{n u}\, dx <+\infty.
$$
If $\alpha=1$, solutions satisfying  $u(x)=o(|x|^2)$ as $|x|\to \infty$ are completely classified (see \cite{Xu,MartClass} and non-degeneracy has been proved when $n$ is even (see \cite{BDOP07,Morlando}). However, there are also solutions to \eqref{Eq:singDimn} which behave at infinity as a quadratic polynomial (see \cite{Lin98,ChangChen01}). The singular case is more difficult to study. Differently from the 1d-case and 2d-case, if $n\ge 3$ and $\alpha \neq 1$, there is no explicit example of solution to \eqref{Eq:singDimn}. However, in \cite{IMRNHMM} it is proved that for any $\alpha>0$, \eqref{Eq:singDimn} has a radially symmetric solution with logarithmic behavior at infinity and  infinitely many radially symmetric solutions with polynomial behavior at infinity. To our knowledge no non-degeneracy result has been obtained so far. \\

We point out that the one-dimensional case that we treat in Theorem \ref{Thm:1d} is the only one in which a restriction on $\alpha$ appears. Moreover, Theorem \ref{Thm:1d} is the first classification result for the linearization of \eqref{Eq:singDimn} with $\alpha\neq 1$ in odd dimension, which makes \eqref{Eq:singDimn} non-local. Non-degeneracy results for non-local problems are extremely delicate to obtain. For sake of completeness we quote some results concerning the non-degeneracy of solutions in the fractional framework.
The non-degeneracy of solutions to the   non-local critical equation 
$$(-\Delta)^s u=u^\frac{n+2s}{n-2s} \ \hbox{in}\ \mathbb R^n$$
was studied by Chen, Frank and Weth and 
D\'avila, del Pino and Sire in \cite{cfw,DDS}. In the subcritical regime, i.e $1<p<\frac{n+2s}{n-2s}$, the non-degeneracy of least energy solution
$$(-\Delta)^s u+u=u^{p} \ \hbox{in}\ \mathbb R^n,$$
was completely achieved by Frank, Lenzmann and Silvestre in 
\cite{FLS}, after preliminary
works in particular cases discussed by
Fall and Valdinoci \cite{FV} when $s$ is close to 1 and by
Frank and Lenzmann \cite{FL} when $n=1$.
The non-degeneracy of minimizers for the fractional Caffarelli-Kohn-Nirenberg
inequality,  which after multiplication by $|x|^{-\alpha}$ are solutions to
 $$(-\Delta)^s u +\tau \frac u{|x|^{2s}}=|x|^{-(\beta-\alpha)p}u^{p-1} \ \hbox{in}\ \mathbb R^n$$
 with $p=\frac{2n}{n-2s+2(\beta-\alpha)}$, $\tau\ge 0$ and  
 $-2s<\alpha<\frac{n-2s}2$ and  $\alpha\le\beta<\alpha+s,$
 was obtained by Ao, DelaTorre and González in \cite{ADG} (see also \cite{DNGK}), while the non-degeneracy of 
 minimizers for the fractional Hardy-Sobolev inequality, namely solutions to  (i.e. $\tau=0$ and $\alpha=0$ in the previous equation)
 $$(-\Delta)^s u =|x|^{-\beta p}u^{p-1} \ \hbox{in}\ \mathbb R^n$$
was obtained by Musina and Nazarov in \cite{MN} and { to the critical fractional Hénon equation 
 $$(-\Delta)^s u =|x|^{\alpha}u^{\frac{N+2s+2\alpha}{N-2s}} \ \hbox{in}\ \mathbb R^n$$ by} Alarcon, Barrios and Quaas in \cite{ABQ}.
 
\medskip
The non-degeneracy result of Theorem \ref{Thm:1d} plays a role in the description of parameter-depending problems in which concentration phenomena occur and in which \eqref{eq:main} appears as a limit problem. For example, we refer to \cite{DDMW,DMP,AS1,AS2} for applications of \eqref{eq:main} and \eqref{eq:ExtU} to physical models for the description of galvanic corrosion phenomena for simple electrochemical systems (see e.g. \cite{Deconinck,VX}  and references therein). We believe that Theorems \ref{Thm:1d} and \ref{main} could be useful in the description of non-simple blow-up phenomena for such models.
\medskip

\medskip
This paper is organized as follows. In Section \ref{Sec:prel}, we introduce the notation and we recall some useful results. In Section \ref{Sec:change}, we introduce the changes of variable which allow to reduce Theorems \ref{Thm:1d} and \ref{main} to the study of a Steklov eigenvalue problem, which is studied in Section \ref{Sec:Steklov} concluding the proof.

\section{Preliminaries and classification results}\label{Sec:prel}
Throughout the paper we will denote 
$$
L_\frac{1}{2}(\R) :=\left\{u \in L^1_{loc}(\R)\;:\; \int_{\R} \frac{|u|}{1+x^2}<+\infty\right\}. 
$$
{If  $u\in L_\frac{1}{2}(\R)$, then it belongs to the dual of the Schwartz space $\mathcal  S$ of rapidly decreasing functions. Then, for any $s>0$ it is possible to define the fractional Laplacian} $(-\Delta)^s$ in the sense of tempered distribution by means of the Fourier Transform:
$$
<(-\Delta)^s u, \psi> = \int_{\R} u(\xi) (-\Delta)^{s} \psi \, d\xi, \quad \text{ where } (-\Delta)^{s} \psi = \mathcal{F}^{-1}[|\xi|^{2s} \mathcal F[\ph]], \quad {\psi\in\mathcal S}.
$$ 
{For a given function $f\in L^1_{loc}(\R)$, we say that $u\in L_{\frac{1}{2}}(\R)$} is a weak solution to $(-\Delta)^\frac{1}{2} u = f$ if 
$$
\int_{\R} u  (-\Delta)^\frac{1}{2}\psi  = \int_{\R} f \psi,
$$
for any $\psi\in C^\infty_c(\R)$. In particular, if $u\in L_\frac{1}{2}(\R)$ and \eqref{CondInt} holds, then we say that $u$ is a weak solution to \eqref{eq:main} if 
$$
\int_{\R} u  (-\Delta)^\frac{1}{2}\psi  = \int_{\R} |x|^{\alpha-1} e^u \psi
$$
for any $\psi \in C^\infty_c(\R)$.

We now state a result concerning regularity of weak solutions. We refer to \cite{IMRNHMM} for the proof.

\begin{lemma}\label{lemma:regularity}
Assume $\alpha \in (0,+\infty)$ and let $u\in L_\frac{1}{2}(\R)$ be a weak solution to \eqref{eq:main} such that \eqref{CondInt} holds. Then $u\in C^\infty(\R\setminus \{0\})\cap C^{0,\beta}_{loc}(\R)$ for some $\beta\in (0,1)$. 
\end{lemma}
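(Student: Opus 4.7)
The plan is to split the regularity statement into two pieces: interior smoothness on $\R\setminus\{0\}$, and local H\"older regularity across the singular origin. The two working tools are the harmonic Poisson extension to the upper half-plane $\R^2_+$ (so that $(-\Delta)^\frac{1}{2}u=f$ becomes a Neumann problem for a harmonic function, as in \eqref{eq:ExtU}) and a convolution representation of $u$ via the 1D logarithmic kernel.

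For smoothness away from the origin, set $f(x):=|x|^{\alpha-1}e^{u(x)}$, which lies in $L^1(\R)$ by \eqref{CondInt}. A Brezis--Merle type inequality applied to $f$ yields $e^{pu}\in L^1_{loc}(\R)$ for some $p>1$; multiplying by the smooth weight $|x|^{\alpha-1}$ on $\R\setminus\{0\}$ gives $f\in L^q_{loc}(\R\setminus\{0\})$ for every $q<\infty$. Lifting $u$ to its harmonic extension $U$ on $\R^2_+$ and applying boundary Schauder estimates for the Neumann problem produces $u\in C^{1,\beta}_{loc}(\R\setminus\{0\})$; a standard bootstrap (each improvement of $u$ upgrades $f$, since the weight is smooth away from $0$) then yields $u\in C^\infty(\R\setminus\{0\})$.

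For H\"older regularity across $x=0$, I would introduce the logarithmic potential
\begin{equation*}
v(x):=\frac{1}{\pi}\int_\R\log\frac{1+|y|}{|x-y|}\,f(y)\,dy,
\end{equation*}
normalized so that the integral converges under \eqref{CondInt} and \eqref{L12}, and satisfying $(-\Delta)^\frac{1}{2}v=f$ distributionally. Since $|x|^{\alpha-1}\in L^q_{loc}(\R)$ for some $q>1$ whenever $\alpha\in(0,2)$ and Brezis--Merle gives $e^u\in L^p_{loc}(\R)$ for some $p>1$, H\"older's inequality yields $f\in L^r_{loc}(\R)$ for some $r>1$; the convolution of such an $L^r$ density with the 1D logarithmic kernel then lies in $C^{0,\beta}_{loc}(\R)$ for some $\beta\in(0,1)$ by standard fractional Sobolev estimates. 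On the other hand, $w:=u-v$ lies in $L_\frac{1}{2}(\R)$ and satisfies $(-\Delta)^\frac{1}{2}w=0$ distributionally; the Liouville theorem for the half-Laplacian on $\R$ (which via Fourier analysis on tempered distributions forces $w$ to be a polynomial, further constrained to be constant by $w\in L_\frac{1}{2}(\R)$) gives $w=\mathrm{const}$. Therefore $u=v+\mathrm{const}$ inherits the local H\"older regularity of $v$.

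The main obstacle is the second step: producing a Brezis--Merle improvement that survives multiplication by the singular weight $|x|^{\alpha-1}$ when $\alpha<1$, and justifying that the normalized potential $v$ is well-defined and attains the right modulus of continuity at the origin. Once these quantitative ingredients are in place, the Schauder bootstrap on $\R\setminus\{0\}$ and the Liouville step near the origin are both routine.
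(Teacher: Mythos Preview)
The paper does not prove this lemma in the text; it simply states the result and refers to \cite{IMRNHMM} (Hyder--Mancini--Martinazzi) for the proof. So there is no in-paper argument to compare yours against.

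That said, your sketch is essentially the standard route and is almost certainly what lies behind the cited reference. The representation $u=v+\mathrm{const}$ via the logarithmic potential, together with the Liouville theorem for $(-\Delta)^{1/2}$ on $L_{1/2}(\R)$, is exactly the mechanism the present paper itself uses later (see the proof of Lemma~\ref{lemma:grphi}, where the same normalized kernel $\log\frac{1+|y|}{|x-y|}$ and the same Liouville step appear). Once $u=v+c$, the local Brezis--Merle/John--Nirenberg argument you outline---splitting $f=f\chi_{B_\delta(x_0)}+f\chi_{B_\delta(x_0)^c}$ so that the first piece has arbitrarily small $L^1$-mass and the second contributes a smooth potential---does give $e^{pu}\in L^1_{loc}$ for any $p$, hence $f\in L^r_{loc}$ for some $r>1$ even through the singular weight when $\alpha<1$. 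From there, the $L^r$ log-potential lies in $C^{0,\beta}_{loc}$, and away from the origin the bootstrap $f\in L^q\Rightarrow u\in C^{0,\gamma}\Rightarrow f\in C^{0,\gamma}\Rightarrow u\in C^{1,\gamma}\Rightarrow\cdots$ via Schauder for the Neumann extension is routine.

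Two small remarks. First, your parenthetical ``whenever $\alpha\in(0,2)$'' for $|x|^{\alpha-1}\in L^q_{loc}$ is unnecessarily restrictive: for $\alpha\ge 1$ the weight is locally bounded, and for $\alpha\in(0,1)$ it lies in $L^q_{loc}$ for every $q<\frac{1}{1-\alpha}$, so the claim holds for all $\alpha>0$ as the lemma requires. Second, the obstacle you flag at the end is real but not severe: the localization of the Brezis--Merle estimate is exactly what resolves it, and you have implicitly invoked it already. With that made explicit, your argument is complete.
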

 Condition \eqref{CondInt} also allows to describe the asymptotic behavior of $u$ as $|x|\to \infty$. 

\begin{lemma}\label{lemma:asym u}
Assume $\alpha\in (0,+\infty)$ and let $u\in L_\frac{1}{2}(\R)$ be a weak solution to \eqref{eq:main} such that \eqref{CondInt} holds. Then there exist $\beta >\alpha$ and $C>0$ such that 
$$
|u(x)  + \beta \ln|x||\le C,
$$
for all $x\in \R$ with $|x|\ge 1$. 
\end{lemma}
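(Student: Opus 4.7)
The plan is to derive a representation formula for $u$ using the fundamental solution of the one-dimensional half-Laplacian, which (up to a constant) is $-\frac{1}{\pi}\log|x|$. Namely, since $f(y) := |y|^{\alpha-1} e^{u(y)} \in L^1(\R)$ by \eqref{CondInt}, I would first set
$$v(x) := -\frac{1}{\pi}\int_\R \log|x-y|\, f(y)\, dy,$$
verify that $v\in L_\frac{1}{2}(\R)$ (the logarithmic growth at infinity is compatible with the weight $(1+x^2)^{-1}$, and the local integrability follows from $f\in L^1$), and check that $(-\Delta)^\frac{1}{2} v = f$ distributionally. Then $w := u-v$ lies in $L_\frac{1}{2}(\R)$ and satisfies $(-\Delta)^\frac{1}{2} w = 0$ in the distributional sense. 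A standard Liouville-type theorem for the half-Laplacian on $L_\frac{1}{2}(\R)$ (tempered distributions with $(-\Delta)^\frac{1}{2} w = 0$ are polynomials of degree strictly less than $1$, i.e.\ constants, once the $L_\frac{1}{2}$-condition is imposed) then forces $w \equiv c$ for some $c\in\R$.

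Setting $\beta := \frac{1}{\pi}\int_\R f(y)\, dy > 0$, the identity
$$u(x) + \beta \log|x| = c - \frac{1}{\pi}\int_\R \log\left|1-\frac{y}{x}\right| f(y)\, dy$$
reduces the problem to proving uniform boundedness of the right-hand side for $|x|\ge 1$. I would split the domain of integration into $|y|\le |x|/2$, $|x|/2 \le |y|\le 2|x|$ and $|y|\ge 2|x|$. On the outer regions the integrand $|\log|1-y/x||$ is uniformly bounded, so the contribution is controlled by $\|f\|_{L^1(\R)}$. In the middle annulus one absorbs the factor $|y|^{\alpha-1}$ into a constant depending only on $\alpha$ (since $|y|\sim |x|$ stays away from the singularity at the origin for $|x|\ge 1$), and uses the local boundedness of $e^u$ (which follows from the regularity result recalled just above, after a preliminary argument showing that $u$ is bounded above) together with the fact that $\log$ is locally integrable to conclude that the singular contribution near $y=x$ is uniformly bounded.

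Finally, the strict inequality $\beta > \alpha$ is forced by \eqref{CondInt}: the bound just established gives $|x|^{\alpha-1}e^{u(x)} \le C|x|^{\alpha-1-\beta}$ for $|x|\ge 1$, and integrability at infinity requires $\alpha-1-\beta < -1$, i.e.\ $\beta > \alpha$.

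The main obstacle I anticipate is the uniform control of the convolution on the middle annulus $|y|\sim |x|$, where the logarithmic singularity of $\log|x-y|$ meets a potentially concentrated mass of $f$. To handle it cleanly one typically wants $e^u$ to be \emph{uniformly} bounded on compact sets of $\R\setminus\{0\}$ with a bound that does not deteriorate at infinity; this is usually obtained by a bootstrap argument exploiting the representation formula itself (first deducing $u\le C$ from the sign of $-\log|x-y|$ near the diagonal and the $L^1$-bound on $f$, and then iterating). Everything else — the identification of the Liouville constant, the decomposition of the convolution, and the final comparison $\beta>\alpha$ — is then a routine computation.
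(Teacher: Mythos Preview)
The paper does not prove this lemma itself; it refers to \cite{IMRNHMM}. Your outline---representation via the fundamental solution $-\tfrac{1}{\pi}\log|\cdot|$, Liouville on $L_{\frac12}(\R)$ to get $u-v\equiv c$, then asymptotic analysis of the convolution---is the standard route and matches that reference.

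Two points need adjustment. First, on the region $|y|\ge 2|x|$ the integrand $|\log|1-y/x||$ is \emph{not} uniformly bounded; it grows like $\log|y|$. This is harmless for the \emph{upper} bound on $u(x)+\beta\log|x|$ (the sign of that contribution is favorable), but the \emph{lower} bound requires the moment condition $\int_\R \log(2+|y|)\,f(y)\,dy<\infty$, which does not follow from $f\in L^1$ alone. Second, your plan to deduce $\beta>\alpha$ only at the end, from the two-sided bound plus \eqref{CondInt}, then becomes circular: the natural source of that log-moment is the decay $f(y)\lesssim|y|^{\alpha-1-\beta}$, which itself needs $\beta>\alpha$. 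In \cite{IMRNHMM} the loop is closed by an independent argument giving $\beta=2\alpha$ (alluded to in the paper just after the lemma); once $\beta>\alpha$ is known, the log-moment and hence the lower bound follow. Your handling of the middle annulus via a small-local-mass/Brezis--Merle step is the correct mechanism for the upper bound.
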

We refer again to \cite{IMRNHMM} for the proof. In fact, following the arguments of \cite{IMRNHMM} one can show that $\beta = 2\alpha$. However, for our purposes here we only need the estimate of $\beta > \alpha$. 

\medskip
To relate \eqref{eq:main}  with \eqref{eq:ExtU}, we let 
\begin{equation*}
P_y(x) = \frac{1}{\pi} \frac{y}{x^2+y^2}
\end{equation*} denote the Poisson kernel for the half-plane $\R^2_+:=\{(x,y)\in \R^2\,:\; y>0\}$. For a function $u\in L_\frac{1}{2}(\R)$, we can define the Poisson extension of u as 
\begin{equation*}
U(x,y) := (u * P_y)(x) = \frac{1}{\pi} \int_{\R} \frac{y\, u(\xi)}{(x-\xi)^2+y^2}\,d\xi, \quad (x,y)\in \R^2_+.
\end{equation*}

We recall the following standard properties of Poisson extensions:

\begin{proposition}\label{PropExt} Assume $u\in L_\frac{1}{2}(\R)$.
\begin{enumerate}
\item If $u\in C(a,b)$ for some $a,b\in \R$, $a<b$, then $U$ extends continuously to $(a,b)\times\{0\}$ and $U(x,0)=u(x)$ for any $x\in (a,b)$. 
\item If $u\in C^{1,s}(a,b)$ for some $s\in (0,1)$ and $a,b\in \R$ with $a<b$, then the partial derivatives of $U$ extend continuously to $(a,b)\times\{0\}$ and $\frac{\partial U}{\partial y}(x,0)=-(-\Delta)^\frac{1}{2}u(x)$ for any $x\in (a,b)$. 
\item If $(-\Delta)^\frac{1}{4}u\in L^2(\R)$, then $|\nabla U|\in L^2(\R^2_+)$ and $\|\nabla U\|_{L^2(\R^2_+)} = \|(-\Delta)^\frac{1}{4}u\|_{L^2(\R)}$. 
\end{enumerate}
\end{proposition}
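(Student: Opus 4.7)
The plan is to verify the three standard properties of the Poisson extension by direct computation with the kernel $P_y(x) = \frac{1}{\pi}\frac{y}{x^2+y^2}$, treating each item in turn.

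For (1), I would exploit the fact that $P_y$ is an approximate identity: $\int_\R P_y(x)\,dx = 1$ for every $y>0$, while $\int_{|x|>\delta}P_y(x)\,dx\to 0$ as $y\to 0^+$ for each fixed $\delta>0$. Fixing $x_0\in(a,b)$ and $\eps>0$, continuity of $u$ on $(a,b)$ lets me choose $\delta>0$ such that $|u(\xi)-u(x_0)|<\eps$ for $|\xi-x_0|<\delta$. Splitting
$$
U(x,y)-u(x_0) = \int_{|\xi-x_0|<\delta}(u(\xi)-u(x_0))P_y(x-\xi)\,d\xi + \int_{|\xi-x_0|\ge\delta}(u(\xi)-u(x_0))P_y(x-\xi)\,d\xi,
$$
the first piece is bounded by $\eps$ uniformly, while the second vanishes as $(x,y)\to(x_0,0)$ since on that region $P_y(x-\xi)\le Cy(1+\xi^2)^{-1}$ and $u\in L_{1/2}(\R)$.

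For (2), differentiation under the integral sign is justified inside $\R^2_+$, giving explicit formulas for $\partial_x U$ and $\partial_y U$. The delicate step is the boundary limit: by subtracting $u(x)$ and using the symmetric cancellation of the kernel, one rewrites
$$
\partial_y U(x,y) = \frac{1}{\pi}\int_\R \frac{(\xi-x)^2-y^2}{((\xi-x)^2+y^2)^2}\bigl(u(\xi)-u(x)\bigr)\,d\xi,
$$
and the $C^{1,s}$ assumption on $u$ near $x$ makes the integrand absolutely integrable near $\xi=x$, allowing one to pass to the limit $y\to 0^+$ and match the principal-value definition of $-(-\Delta)^{1/2}u(x)$. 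For (3), I would use Fourier analysis: since $\hat U(\xi,y) = e^{-|\xi|y}\hat u(\xi)$, one has $\widehat{\partial_x U}(\xi,y) = i\xi e^{-|\xi|y}\hat u(\xi)$ and $\widehat{\partial_y U}(\xi,y) = -|\xi|e^{-|\xi|y}\hat u(\xi)$; Plancherel in $x$ combined with integration in $y$ yields
$$
\|\partial_x U\|^2_{L^2(\R^2_+)} = \|\partial_y U\|^2_{L^2(\R^2_+)} = \frac{1}{2}\int_\R |\xi|\,|\hat u(\xi)|^2\,d\xi,
$$
so summing gives $\|\nabla U\|^2_{L^2(\R^2_+)}=\int_\R |\xi|\,|\hat u(\xi)|^2\,d\xi = \|(-\Delta)^{1/4}u\|^2_{L^2(\R)}$.

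The main obstacle lies in item (2): justifying the boundary limit of the singular integral representing $\partial_y U(x,0)$ requires carefully exploiting the Hölder regularity of $\nabla u$ to ensure that the limit can be interchanged with integration. Items (1) and (3), by contrast, are essentially automatic once the appropriate tool (approximate identity, respectively Plancherel) is invoked.
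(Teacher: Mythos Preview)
The paper does not prove this proposition: it is introduced with ``We recall the following standard properties of Poisson extensions'' and stated without proof, so there is no argument in the paper to compare against. Your outline is the standard one and is correct.

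One small imprecision in item (2): after subtracting only $u(x)$, the integrand $\dfrac{u(\xi)-u(x)}{(\xi-x)^2}$ is \emph{not} absolutely integrable near $\xi=x$ under a $C^{1,s}$ hypothesis, because the linear part $u'(x)(\xi-x)$ produces a $1/(\xi-x)$ singularity. What actually happens is that the regularized kernel $\dfrac{(\xi-x)^2-y^2}{((\xi-x)^2+y^2)^2}$ is even in $\xi-x$, so this odd linear term integrates to zero for every $y>0$; the \emph{remainder} $u(\xi)-u(x)-u'(x)(\xi-x)=O(|\xi-x|^{1+s})$ then yields an absolutely integrable contribution, and dominated convergence applies. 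You gesture at this with ``symmetric cancellation of the kernel'', but the sentence about absolute integrability should be attached to the remainder, not to $u(\xi)-u(x)$ itself.
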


\begin{lemma}\label{lemma:PropU}
Assume that $u\in L_\frac{1}{2}(\R)$ is a weak solution of \eqref{eq:main} such that \eqref{CondInt} holds. Then the harmonic extension $U= u*P_y$ is a {weak} solution to \eqref{eq:ExtU}. Moreover the following properties are satisfied.
\begin{enumerate}
\item\label{U1} 
$U\in C^\infty(\R^2_+)\cup C(\overline{\R^2_+})$ and 
\begin{equation}\label{IntUBoundary}
\int_{\partial \R^2_+}  |x|^{\alpha-1} e^U\,<+\infty.
\end{equation} 
\item\label{U2} Let $\beta$ be as in Lemma \ref{lemma:asym u}. Then, there exists $C>0$ such that 
\begin{equation}\label{LogBeta}
\left|U(x,y)+ \beta \ln\sqrt{x^2+y^2}\right|\le C,\quad \text{ in } \R^2_+\setminus B_2(0,0).
\end{equation}
In particular 
\begin{equation}\label{IntU}
\int_{\R^2_+}|x|^{2(\alpha-1)}e^{2U} <+\infty. 
\end{equation}
\end{enumerate}
\end{lemma}
\begin{proof}
	{By Lemma \ref{lemma:regularity} and Proposition \ref{PropExt} we can assert that $U\in C(\overline{\R^2_+})$. In particular $U(x,0)$ is continuous. Moreover, Lemma \ref{lemma:asym u} guarantees that $ |x|^{\alpha-1} e^{U(x,0)}= O \left(\frac{1}{|x|^{1+\beta-\alpha}}\right)$ for $|x|\ge 1$, for some $\beta>\alpha$. Thus we get \eqref{U1}}.
	
Now we prove \eqref{U2}, which is a consequence of the formula 
\begin{equation}\label{extLog}
\frac{1}{\pi}\int_\R \frac{y \ln |\xi|}{(x-\xi)^2+y^2}\,d\xi = \ln \sqrt{x^2+y^2}. 
\end{equation}
Indeed, \eqref{extLog} gives 
$$
\left|U(x,y)+ \beta \ln\sqrt{x^2+y^2}\right| \le \frac{1}{\pi}\int_{\R} \frac{y |u(\xi)+\beta \ln|\xi||}{(x-\xi)^2+y^2}\,d\xi \le C + \frac{1}{\pi}\int_{-1}^1  \frac{y |u(\xi)+\beta \ln |\xi||}{(x-\xi)^2+y^2}\,d\xi,
$$
where the last inequality follows from Lemma \ref{lemma:asym u}. Finally, we observe that if $\sqrt{x^2+y^2}\ge 2$,  then 
$$
\sqrt{(x-\xi)^2+y^2} = |(x,y)-(\xi,0)|\ge  |(x,y)|-|\xi|\ge \frac{|(x,y)|}{2},
$$
for any $\xi \in [-1,1]$, so that 
$$
\frac{y}{(x-\xi)^2+y^2}\le \frac{4y}{x^2+y^2}\le \frac{4}{\sqrt{x^2+y^2}}\le 2.
$$
Then 
$$
\frac{1}{\pi}\int_{-1}^1  \frac{y|u(\xi)+\beta \ln |\xi||}{(x-\xi)^2+y^2}\,d\xi\le \frac{2}{\pi}\| u\|_{L^1(-1,1)} + \frac{2}{\pi}\int_{-1}^1|\ln |\xi||\,d\xi =   \frac{2}{\pi}\| u\|_{L^1(-1,1)} + \frac{4}{\pi}.
$$
This proves \eqref{LogBeta}. Since $\beta >\alpha$, {again by Proposition \ref{PropExt}} we get \eqref{IntU}.
\end{proof}

\begin{proposition}\label{ClassificationU} Assume $\alpha\in (0,1)\cup (1,2)$. Let $U\in C(\overline{\R^2_+})$ be a {weak} solution to \eqref{eq:ExtU} such that \eqref{IntUBoundary} and \eqref{IntU} hold. Then, there exists $\rho >0$ such that $U = U_\rho$ where $U_\rho$ is defined as in \eqref{bubble}. Moreover, if $\alpha \ge 2$, there is no solution  to \eqref{eq:ExtU} which is continuous in $\overline{\R^2_+}$.  
\end{proposition}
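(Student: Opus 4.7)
My plan is to reduce Proposition \ref{ClassificationU} to the classification of solutions of the one-dimensional equation \eqref{eq:main}, which is established in \cite{gjm,ZZ18}. The reduction proceeds in four stages.

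\emph{Step 1 (trace solves the line equation).} Set $u(x) := U(x,0)$, which lies in $C(\R)$ by the continuity assumption; condition \eqref{IntUBoundary} is exactly \eqref{CondInt}. To check that $u$ solves \eqref{eq:main} weakly, I pick $\psi\in C^\infty_c(\R)$ with Poisson extension $\Psi$ and apply Green's identity to the harmonic pair $(U,\Psi)$ on the half-ball $B_R^+ = B_R \cap \R^2_+$:
\[
\int_{I_R}\bigl[u\,(-\Delta)^{1/2}\psi - |x|^{\alpha-1}e^u\psi\bigr]\,dx = -\int_{S_R}\bigl[\Psi\,\partial_\nu U - U\,\partial_\nu \Psi\bigr]\,ds,
\]
where $I_R = (-R,R)\times\{0\}$, $S_R = \partial B_R \cap \R^2_+$, and $\partial_\nu \Psi|_{y=0} = -(-\Delta)^{1/2}\psi$ by Proposition \ref{PropExt}. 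The spherical term vanishes as $R\to \infty$ by the rapid decay of $\Psi$ together with a logarithmic growth control on $U$, which I would derive from the Neumann--Green representation
\[
U(z) = -\frac{1}{\pi}\int_{\R}\ln|z - (\xi,0)|\,|\xi|^{\alpha-1}e^{u(\xi)}\,d\xi + c,
\]
valid because $|x|^{\alpha-1}e^u \in L^1(\R)$. The same formula yields $u(x) = O(\ln|x|)$, so $u\in L_\frac{1}{2}(\R)$.

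\emph{Step 2 (classification of $u$).} For $\alpha\in(0,1)\cup(1,2)$, applying the classification in \cite{gjm,ZZ18} gives $u = u_\rho$ for some $\rho>0$.

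\emph{Step 3 (identification $U = U_\rho$).} By Lemma \ref{lemma:PropU}, $U_\rho$ satisfies $|U_\rho(z) + \beta\ln|z||\le C$ for $|z|$ large. The representation of Step 1 gives the same estimate for $U$. Hence $V := U - U_\rho$ is harmonic on $\R^2_+$, continuous up to $\overline{\R^2_+}$, vanishes on $\partial \R^2_+$, and is bounded at infinity. By Schwarz reflection across $\{y=0\}$ followed by Liouville's theorem, $V \equiv 0$, and so $U = U_\rho$.

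\emph{Step 4 (non-existence for $\alpha \ge 2$).} Assuming a solution $U$ exists with $\alpha\ge 2$, Step 1 produces $u$ satisfying \eqref{eq:main}, \eqref{L12} and \eqref{CondInt}. The plan is to derive a Pohozaev-type identity by testing $-\Delta U = 0$ against $(x,y)\cdot \nabla U$ on $B_R^+$. Together with the flux relation $M = \pi\beta$ (where $M = \int_\R |x|^{\alpha-1}e^u$ and $U(r,\theta) = -\beta \ln r + g(\theta) + o(1)$ as $r\to \infty$), integration by parts yields
\[
\beta(\beta - 2\alpha) = \frac{1}{\pi}\int_0^\pi g'(\theta)^2\,d\theta \ge 0.
\]
Combined with a second identity---either from an additional test function or from an adaptation of the moving-plane argument of \cite{gjm} to the extended setting---I expect to reach a contradiction for $\alpha\ge 2$.

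The hard part will clearly be Step 4: the elementary Pohozaev identity only yields the bound $\beta\ge 2\alpha$, which does not by itself exclude $\alpha\ge 2$. A sharper integral identity, or a direct classification argument carried out on the extended problem itself, will be needed. Steps 1 and 3 are technical but routine once the logarithmic growth estimate on $U$ is in hand.
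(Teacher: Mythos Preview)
Your route is quite different from the paper's, and it has genuine gaps.

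\textbf{What the paper does.} The paper never passes to the one-dimensional problem. Instead it sets
\[
V(x,y)=2U(x,y)+2(\alpha-1)\ln\sqrt{x^2+y^2},
\]
which absorbs the weight: $V$ is harmonic in $\R^2_+$, satisfies the \emph{autonomous} boundary condition $\partial_\nu V=2e^{V/2}$ on $\partial\R^2_+\setminus\{0\}$, and (via \eqref{IntUBoundary}--\eqref{IntU}) satisfies $e^{V}\in L^1(\R^2_+)$, $e^{V/2}\in L^1(\partial\R^2_+)$. The classification of \cite{gjm} applies directly to \emph{this} two-dimensional problem and produces the explicit formula $V=\ln\bigl(4\lambda^2\gamma^2|z|^{2(\gamma-1)}/|z^\gamma-z_0|^4\bigr)$. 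Continuity of $U$ at the origin forces $\gamma=\alpha$, and the requirement that $z\mapsto z^\alpha$ be single-valued on $\overline{\R^2_+}$ forces $\alpha<2$. So both the classification for $\alpha\in(0,1)\cup(1,2)$ and the non-existence for $\alpha\ge2$ drop out at once, with no Pohozaev argument needed.

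\textbf{Problems with your approach.} First, Step~2 is circular within the paper's logic: the one-dimensional classification of $u_\rho$ (Proposition following this one) is \emph{deduced} from Proposition~\ref{ClassificationU}, not the other way round. The reference \cite{gjm} treats the two-dimensional extended problem, not the line equation. Second, Step~1 contains a real circularity: to kill the spherical boundary term in Green's identity you need $|U(z)|=O(\ln|z|)$, but you propose to get this from the Neumann--Green representation $U=-\tfrac{1}{\pi}\int\ln|z-\xi|\,f(\xi)\,d\xi+c$, and establishing that representation (i.e.\ showing $U$ minus the logarithmic potential is constant) already requires growth control on $U$. The hypotheses \eqref{IntUBoundary}--\eqref{IntU} give no pointwise bound on $U$ by themselves. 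Third, as you admit, Step~4 is incomplete: the Pohozaev identity yields only $\beta\ge2\alpha$, which does not exclude $\alpha\ge2$.

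The substitution $U\mapsto V$ is the key device you are missing; it simultaneously removes the singular weight and makes the non-existence for $\alpha\ge2$ transparent.
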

\begin{proof}
Taking 
$$V(x,y)=2U(x,y)+2(\alpha-1)\ln\sqrt{x^2+y^2} $$
we see that $V$ solves 
\begin{equation}\label{gjm-0}
 \Delta  V  = 0\ \hbox{in}\ \mathbb R^2_+ ,\ 
\partial _\nu V= 2 e^\frac V2 \ \hbox{on}\ \partial \mathbb R^2_+\setminus \{0\},
\end{equation}  with  
\begin{equation}\label{gjm-1}
\int\limits_{\mathbb R^2_+}   e^{V(z)}dz<+\infty \quad \text{ and } \int\limits_{\mathbb \partial \R^2_+}   e^{\frac{V(z)}{2}}dz<+\infty.
\end{equation}
{Note that by Proposition \ref{PropExt} we can assert that \eqref{gjm-0} is satisfied in the classical sense.}
Since $U$ is continuous at $0$, we further have 
\begin{equation}\label{Vlog}
\lim_{(x,y)\to (0,0)} V(x,y)-2(\alpha-1)\ln \sqrt{x^2+y^2} = U(0,0).
\end{equation}
In \cite{gjm} it is proved that all the solutions  to \eqref{gjm-0}-\eqref{gjm-1} can be written in complex variable as
\begin{equation}\label{gjm-2}
V(z)=\ln\frac{4\lambda^2\gamma^2|z|^{2(\gamma-1)}}{|z^\gamma-z_0|^4},\ z_0=\rho e^{i\theta_0}.
\end{equation}
where $\rho>0$, $\gamma>0$ and the parameters $\lambda>0$ and $\theta_0$ must satisfy
\begin{equation}\label{gjm-3}
\lambda=-\rho\sin\theta_0\ \hbox{and}\ \lambda=\rho\sin(\theta_0-\pi\gamma),\end{equation}
or 
\begin{equation*}%\label{gjm-bis}
V(z) = \ln \frac{\pi^2}{|z|^2 |\ln z- z_0|},
\end{equation*}
where $z_0\in \mathbb C$ and $Im(z_0) = \frac{\pi}{2}$. Since  \eqref{Vlog} holds, we must have that \eqref{gjm-2}  hold and $\gamma = \alpha$. Furthermore, in order to have $V$ well defined on $\overline{\R^2_+}$, it is necessary that $\alpha = \gamma \in (0,2)$. Since we are also assuming $\alpha \neq 1$,  \eqref{gjm-3} yields
\begin{equation*}%\label{gjm-4}
  \theta_0=\frac{\pi\alpha}2+\pi.\end{equation*}
%  \hbox{and}\ \gamma\in(0,2)
Then we have proved that $U$ is given by 
$$
U(x,y) = \frac{1}{2}\ln \left(\frac{4 \alpha^2 \rho^2 \sin^2\theta_0}{|z^\alpha-\rho e^{i\theta_0}|^4}  \right) = U_\rho(x,y)
$$
for any $(x,y)\in \R^2_+$.
\end{proof}

As a straightforward consequence of Proposition \ref{ClassificationU}  and Lemma \ref{lemma:PropU} we get the following classification result for \eqref{eq:main}.

\begin{proposition}
Assume $\alpha \in (0,1)\cup (1,2)$ and let $u\in L_\frac{1}{2}(\R)$ be a weak solution to \eqref{eq:main} such that \eqref{CondInt} holds. Then there exists $\rho >0 $ such that $u = u_\rho$ where $u_\rho$ is defined as in \eqref{bub-sing}. 
\end{proposition}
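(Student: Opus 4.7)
The plan is to lift the problem from the line to the half-plane via the Poisson extension, apply the classification of extended solutions given in Proposition \ref{ClassificationU}, and then read off the boundary trace.

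First I would let $u$ satisfy the hypotheses and set $U(x,y) := (u \ast P_y)(x)$ for $(x,y)\in \R^2_+$. By Lemma \ref{lemma:PropU}, $U$ belongs to $C^\infty(\R^2_+)\cap C(\overline{\R^2_+})$, solves the extension problem \eqref{eq:ExtU}, and satisfies the two integrability conditions \eqref{IntUBoundary} and \eqref{IntU}. At this point all the hypotheses of Proposition \ref{ClassificationU} are in force (the assumption $\alpha\in(0,1)\cup(1,2)$ is needed precisely so that a continuous extension can exist and so that the auxiliary function $V=2U+2(\alpha-1)\ln|z|$ in the classification argument has the required form). Hence there exists $\rho>0$ with $U\equiv U_\rho$, where $U_\rho$ is the bubble in \eqref{bubble}.

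Next I would recover $u$ from its extension. Since weak solutions are (locally) continuous on $\R$ by the regularity lemma, part (1) of Proposition \ref{PropExt} gives $u(x)=U(x,0)=U_\rho(x,0)$ for every $x\in\R$. The remaining step is the routine identification $U_\rho(\cdot,0) = u_\rho$. Writing $z=x$ as $|x|e^{i\theta}$ with $\theta=0$ for $x>0$ and $\theta=\pi$ for $x<0$, one has $z^\alpha = |x|^\alpha e^{i\alpha\theta}$, and a direct computation with $\theta_0=\tfrac{\pi\alpha}{2}+\pi$ gives
\[
|z^\alpha - z_0|^2 = |x|^{2\alpha} + 2\rho|x|^\alpha \cos\tfrac{\pi\alpha}{2} + \rho^2
\]
in both cases (the angular difference $\theta_0 - \alpha\theta$ equals $\pi - \tfrac{\pi\alpha}{2}$ for $x<0$ and $\pi+\tfrac{\pi\alpha}{2}$ for $x>0$, which produce the same cosine up to sign, and the minus sign in $-2\rho|x|^\alpha\cos(\cdot)$ flips accordingly). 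Together with $|\sin\theta_0|=\sin\tfrac{\pi\alpha}{2}$, this reproduces exactly \eqref{bub-sing}, yielding $u=u_\rho$.

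I do not anticipate a serious obstacle: the hard analytic work (existence and properties of the Poisson extension, the classification on $\R^2_+$ from \cite{gjm}, and the continuity up to the boundary) has been packaged into Lemma \ref{lemma:PropU} and Proposition \ref{ClassificationU}. The only genuine care point is to ensure that $U$ really falls under the hypotheses of Proposition \ref{ClassificationU}, in particular continuity at the singular point $0$, which is guaranteed by the Hölder regularity of $u$ on all of $\R$; the remaining trace identification is a bookkeeping calculation with the branch of $z^\alpha$.
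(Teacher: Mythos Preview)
Your argument is correct and matches the paper's approach exactly: the paper states this proposition as a ``straightforward consequence of Proposition \ref{ClassificationU} and Lemma \ref{lemma:PropU}'', and you have spelled out precisely that deduction, including the trace identification $U_\rho(\cdot,0)=u_\rho$. The only (harmless) imprecision is the parenthetical about the cosines: in fact $\cos(\pi+\tfrac{\pi\alpha}{2})=\cos(\pi-\tfrac{\pi\alpha}{2})=-\cos\tfrac{\pi\alpha}{2}$ exactly, so no sign flip is needed.
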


 We now briefly discuss the equivalence of the linarized problems \eqref{eq:lin} and \eqref{Lin}.  Let us fix $\rho >0$ and $\alpha \in (0,1)\cup (1,2)$. To simplify the notation, in the following we write $u=u_\rho$, without writing explicitly the dependence on $\rho$. We consider the space 
 $$
 \mathcal{H}:=\{{\ph }\in L^1_{loc}(\R) \;:\; |x|^{\alpha-1} e^u \ph^2 \in L^1(\R),\;  (-\Delta)^\frac{1}{4}\ph \in L^2(\R)\}. 
 $$ 
 We observe that the condition $|x|^{\alpha
 -1} e^u \ph^2 \in L^1(\R)$ implies $\ph \in L_\frac{1}{2}(\R)$. 
 Indeed, we have
 $$  \int\limits_{\mathbb R} \frac{|\varphi(x)|}{1+x^2} = \int\limits_{\mathbb R} |x|^{\frac{\alpha-1}{2}}e^\frac u2 \frac{1}{|x|^{\frac{\alpha-1}{2}}e^\frac u2 }\frac{|\varphi(x)|}{1+x^2} \le\left(\int\limits_{\mathbb R} |x|^{\alpha-1}e^u\varphi^2\right)^\frac12
\left(\int\limits_{\mathbb R} {\frac{1}{|x|^{\alpha-1}e^u}}\frac1{(1+x^2)^2} \right)^\frac12 \hspace{-0.2cm},
$$
where, since $|x|^{\alpha-1}e^{u} \sim \frac{1}{|x|^{\alpha+1}}$ as $|x|\to +\infty$,  $|x|^{\alpha-1}e^u \sim  |x|^{\alpha-1}$ as $|x|\to 0$, and $\alpha\in (0,2)$,
$$
\int\limits_{\mathbb R} {\frac{1}{|x|^{\alpha-1}e^u} }\frac1{(1+x^2)^2} \le C\left(\int_{-1}^1\frac{1}{|x|^{\alpha-1}} + \int_{|x|\ge 1} \frac{1}{|x|^{3-\alpha}} \right)<+\infty. 
$$
We now show that $\ph$ can grow at most logarithmically as $|x|\to +\infty$. 

\begin{lemma}\label{lemma:grphi}
Assume that $\ph \in \mathcal{H}$ is a weak solution to \eqref{eq:lin}. Then, there exists $C_1,C_2 >0$ such that 
$|\ph(x)|\le C_1 + C_2 \ln|x|$ for any $x \in \R$ with $|x|>1$. 
\end{lemma}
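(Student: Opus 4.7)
The plan is to represent $\ph$ as the sum of a logarithmic potential of the source $f(x):=|x|^{\alpha-1}e^{u_\rho(x)}\ph(x)$ plus a bounded correction, arguing in the extension $\R^2_+$. First, Cauchy--Schwarz combined with \eqref{CondInt} and \eqref{cond:phi} gives
\[
\int_\R |f|\,dx \le \Bigl(\int_\R |x|^{\alpha-1}e^{u_\rho}\Bigr)^{\!1/2}\Bigl(\int_\R |x|^{\alpha-1}e^{u_\rho}\ph^2\Bigr)^{\!1/2} < +\infty,
\]
so $f\in L^1(\R)$. The natural candidate for a particular solution is the Newtonian single-layer potential
\[
W(x,y) := -\frac{1}{2\pi}\int_\R \log\bigl((x-\xi)^2+y^2\bigr)\,f(\xi)\,d\xi,
\]
which is harmonic in $\R^2_+$, has outward normal derivative equal to $f$ on $\partial\R^2_+$, and (splitting the defining integral over $\{|\xi|\le |x|/2\}$ and its complement) satisfies $|W(x,y)|\le \tilde C_1+\tilde C_2\log(1+|x|+y)$ on $\overline{\R^2_+}$.

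Let $\Phi$ denote the harmonic extension of $\ph$ (as in \eqref{Lin}) and set $G:=\Phi-W$. Then $G$ is harmonic in $\R^2_+$ with vanishing normal derivative on $\partial\R^2_+$, so it extends by even reflection across $\{y=0\}$ to an entire harmonic function on $\R^2$. Since $\ph\in L_\frac{1}{2}(\R)$ and $W$ has only logarithmic growth, $G$ has at most polynomial growth at infinity, and a classical Liouville-type argument then forces $G$ to be an affine polynomial $a+bx+cy$; the vanishing of $\partial_y G$ on $\{y=0\}$ immediately gives $c=0$.

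It remains to rule out the linear term $bx$. From the explicit form \eqref{bub-sing} one has $|x|^{\alpha-1}e^{u_\rho(x)}\sim C|x|^{-\alpha-1}$ as $|x|\to\infty$, so the integrand $|x|^{\alpha+1}e^{u_\rho(x)}$ decays like $|x|^{1-\alpha}$ and, since $\alpha<2$,
\[
\int_\R |x|^{\alpha-1}e^{u_\rho}\,(bx)^2\,dx = +\infty \qquad \text{unless } b=0.
\]
On the other hand $\int_\R |x|^{\alpha-1}e^{u_\rho}\ph^2<\infty$ by \eqref{cond:phi}, and $\int_\R |x|^{\alpha-1}e^{u_\rho}W(\cdot,0)^2<\infty$ because $W(\cdot,0)$ grows at most logarithmically and $|x|^{\alpha-1}e^{u_\rho}\log^2|x|$ is integrable on $\R$. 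Expanding $\ph(x)=W(x,0)+a+bx$ therefore forces $b=0$, and we conclude $\ph(x)=W(x,0)+a$, which yields the asserted bound $|\ph(x)|\le C_1+C_2\log|x|$ for $|x|>1$. The most delicate point is the Liouville step: one must carefully convert the $L_\frac{1}{2}(\R)$ integrability of $\ph$ together with the logarithmic bound on $W$ into an honest polynomial-growth estimate on the entire harmonic function obtained by reflection, so that the classical characterization of entire harmonic functions of polynomial growth becomes applicable.
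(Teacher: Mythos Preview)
Your strategy coincides with the paper's: build a logarithmic potential of the source $f=|x|^{\alpha-1}e^{u_\rho}\ph$, show it grows at most like $\ln|x|$, and then argue that the remainder is constant by a Liouville principle. The paper carries this out on $\R$: it sets
\[
v(x)=\frac{1}{\pi}\int_{\R}\ln\frac{1+|y|}{|x-y|}\,f(y)\,dy,
\]
verifies $|v(x)|\le C_1+C_2\ln|x|$ by the same Cauchy--Schwarz splittings you indicate, observes $(-\Delta)^{1/2}v=f$, and then applies the \emph{fractional} Liouville theorem directly to $h=\ph-v\in L_{\frac12}(\R)$ to conclude $h$ is constant. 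Your $W(\cdot,0)$ differs from $v$ only by a constant, so up to the Liouville step the two arguments are the same.

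The genuine gap is precisely where you flag it. You reflect $G=\Phi-W$ to an entire harmonic function and want to invoke the classical two-dimensional Liouville theorem, which requires a pointwise polynomial bound on $G$ in $\R^2$. The information available is only $G(\cdot,0)=\ph-W(\cdot,0)\in L_{\frac12}(\R)$, and this does \emph{not} yield a polynomial bound on $G$: the Poisson extension of an $L_{\frac12}$ function has no a priori pointwise control as $y\to 0$, and harmonic functions in a strip can grow exponentially even with bounded boundary data. In fact the statement ``entire harmonic, even in $y$, with trace in $L_{\frac12}(\R)\Rightarrow$ constant'' is exactly the fractional Liouville theorem you are trying to avoid; the cleanest route is to cite it (e.g.\ via the Fourier argument: $h\in L_{\frac12}\subset\mathcal S'$ and $|\xi|\widehat h=0$ force $h$ to be a polynomial, hence a constant). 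Once you do that, your additional step ruling out the linear term $bx$ through the weighted $L^2$ condition becomes unnecessary, since $bx\notin L_{\frac12}(\R)$ already.
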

\begin{proof}
We define $$
v(x) =\frac{1}{\pi}\int_{\R} \ln\left( \frac{1+|y|}{|x-y|}\right) |y|^{\alpha-1} e^u \ph(y) dy.  
$$
{We first note that $|y|^{\alpha-1}e^u\varphi\in L^1(\R)$. Indeed, by Hölder inequality,
\begin{equation*}
\int_{\R}	 |y|^{\alpha-1}e^u\varphi\,dy\leq \left(\int_{\R}|y|^{\alpha-1}e^u\,dy\right)^{1/2}\left(\int_{\R}|y|^{\alpha-1}e^u\varphi^2\,dy\right)^{1/2}<+\infty.
\end{equation*}
By \cite[Lemma 2.3]{DIEHyder}, $v\in L_\frac{1}{2}(\R)$ and it is a weak solution to $(-\Delta)^\frac{1}{2} v =|x|^{\alpha-1} e^u \ph$. } 

{Now we estimate
	$$
	|v(x)|\le \frac{1}{\pi}\int_{\R} \ln (1+|y|) |y|^{\alpha-1} e^{u(y)} |\ph (y)|\,dy + \frac{1}{\pi}\int_{\R} |\ln |x-y|| |y|^{\alpha-1} e^{u(y)} |\ph (y)|\,dy. 
	$$}
%$$
%|v(x)|\le |C| + \frac{1}{\pi}\int_{\R} \ln (1+y) |y|^{\alpha-1} e^{u(y)} \ph (y)\,dy + \frac{1}{\pi}\int_{\R} |\ln |x-y|| |y|^{\alpha-1} e^{u(y)} |\ph (y)|\,dy. 
%$$
By Holder's inequality we have
$$
\begin{aligned}
{\int_{\R} \ln (1+|y|) |y|^{\alpha-1}} &  e^{u(y)} |\ph (y)|\,dy \\
& {\le  \left(\int_{\R} \ln^2 (1+|y|) |y|^{\alpha-1} e^{u(y)}\, dy\right)^\frac{1}{2}} \left(\int_{\R}  |y|^{\alpha-1} e^{u(y)}\ph^2(y)\, dy\right)^\frac{1}{2}{ \leq C}.
\end{aligned}
$$
Moreover, {since $|x|>1$,}
$$\begin{aligned}
\int_{|x-y|\ge 1} \hspace{-0.2cm} & \ln |x-y| |y|^{\alpha-1} e^{u(y)} |\ph (y)|\,dy \\
& \le \int_{|x-y|\ge 1} \ln |x| |y|^{\alpha-1} e^{u(y)} |\ph (y)|\,dy + \int_{|x-y|\ge 1} \ln \left(1+\frac{|y|}{|x|}\right)) |y|^{\alpha-1} e^{u(y)} |\ph (y)|\,dy \\
& \le \ln |x| \int_{\R} |y|^{\alpha-1} e^{u(y)} |\ph (y)|\,dy + \int_{\R} \ln \left(1+|y|\right) |y|^{\alpha-1} e^{u(y)} |\ph (y)|\,dy \le C{(1+\ln |x| )},
\end{aligned}
$$
and 
$$\begin{aligned}
\int_{|x-y|\le 1} & \left|\ln |x-y|\right| |y|^{\alpha-1} e^{u(y)} |\ph (y)|\,dy \\
& \le \left( \int_{|x-y|\le 1}  \ln^2 |x-y| |y|^{\alpha-1} e^{u(y)} \,dy \right)^\frac{1}{2}\left( \int_{|x-y|\le 1}  |y|^{\alpha-1} e^{u(y)}\ph^2(y) \,dy \right)^\frac{1}{2} \\
& \le C \left( \int_{|x-y|\le 1}  \ln^2 |x-y|\,dy\right)^\frac{1}{2}\le C. 
\end{aligned}
$$
We can so conclude that there exist $C_1,C_2 >0$ such that
\begin{equation}\label{growthv}
|v(x)|\le  C_1 + C_2\ln |x|.
\end{equation}
 Since $h = \ph - v$ is $\frac{1}{2}-$harmonic in $\R$ and $h \in L_\frac{1}{2}(\R)$, by Liouville's theorem for the fractional Laplacian (see e.g  \cite[Theorem 4.4]{FallWeth}), we find that $h$ is constant. Then the conclusion follows by \eqref{growthv}. 
\end{proof}

\begin{lemma}\label{lemma:grPhi}
Assume that $\ph\in \mathcal{H}$ and let $\Phi$ be the harmonic extension of $\ph$. Then, there exists $C_1,C_2>0$ such that
$$
|\Phi(x,y)|\le C_1 + C_2 \ln(x^2+y^2) \quad \forall (x,y)\in \R^2_+ \setminus B_1((0,0)). 
$$
\end{lemma}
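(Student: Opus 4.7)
The plan is to bound $\Phi$ pointwise using its Poisson integral representation
\[
\Phi(x,y)=\frac{1}{\pi}\int_\R \frac{y\,\ph(\xi)}{(x-\xi)^2+y^2}\,d\xi,
\]
and transfer the logarithmic growth of $\ph$ (which, assuming $\ph$ solves \eqref{eq:lin}, is provided by Lemma \ref{lemma:grphi}) onto the extension via the key identity \eqref{extLog}. I would split the integral into the contributions from $|\xi|\le 1$ and $|\xi|>1$ and estimate each piece separately, following the same philosophy as the proof of Lemma \ref{lemma:PropU}.

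For the outer piece, Lemma \ref{lemma:grphi} gives constants $A,B>0$ with $|\ph(\xi)|\le A+B\ln|\xi|$ for $|\xi|>1$, so
\[
\frac{1}{\pi}\int_{|\xi|>1}\frac{y\,|\ph(\xi)|}{(x-\xi)^2+y^2}\,d\xi\le A+\frac{B}{\pi}\int_\R\frac{y\,(\ln|\xi|)_+}{(x-\xi)^2+y^2}\,d\xi.
\]
Writing $(\ln|\xi|)_+ = \ln|\xi|+(\ln|\xi|)_-$, the first summand yields exactly $\ln\sqrt{x^2+y^2}$ by \eqref{extLog}, while $(\ln|\xi|)_-$ is integrable and supported on $[-1,1]$, so the remaining integral can be estimated as a uniformly bounded quantity whenever $x^2+y^2\ge 4$: indeed, for such $(x,y)$ and $|\xi|\le 1$ one has $(x-\xi)^2+y^2\ge (x^2+y^2)/2$, so $\frac{y}{(x-\xi)^2+y^2}\le \frac{2}{\sqrt{x^2+y^2}}\le 1$, exactly as in the argument used to derive \eqref{LogBeta}.

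For the inner piece $|\xi|\le 1$, observe that $\ph\in L_{\frac12}(\R)$ implies $\ph\in L^1(-1,1)$ (since $1+\xi^2$ is bounded there). Using the same pointwise bound on the Poisson kernel as above, we get
\[
\frac{1}{\pi}\int_{-1}^{1}\frac{y\,|\ph(\xi)|}{(x-\xi)^2+y^2}\,d\xi \le \frac{2}{\pi}\,\|\ph\|_{L^1(-1,1)},
\]
for $(x,y)$ with $x^2+y^2\ge 4$. Adding the two pieces, we obtain $|\Phi(x,y)|\le C_1' + C_2'\ln\sqrt{x^2+y^2}$ in the region $x^2+y^2\ge 4$.

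It remains to control $\Phi$ in the annular region $1\le x^2+y^2\le 4$. Here $\Phi$ is harmonic in $\R^2_+$ and continuous up to the boundary away from the origin (since $\ph$ is, by the regularity theory underlying Lemma \ref{lemma:grphi}), so $\Phi$ is bounded on this compact piece. Absorbing this uniform bound into the constants, and using $2\ln\sqrt{x^2+y^2}=\ln(x^2+y^2)$, yields the desired estimate. The main (mild) obstacle is ensuring the hypothesis is read so that Lemma \ref{lemma:grphi} is available—i.e.\ that $\ph$ is a weak solution of \eqref{eq:lin} as well as a member of $\mathcal{H}$—after which the argument is essentially a bookkeeping exercise around the formula \eqref{extLog}.
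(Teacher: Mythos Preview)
Your proposal is correct and follows essentially the same route as the paper: the paper's proof simply cites Lemma~\ref{lemma:grphi} and formula~\eqref{extLog} and leaves the Poisson-kernel bookkeeping implicit (it is the same computation already written out in the proof of Lemma~\ref{lemma:PropU}), while you spell out the split $|\xi|\le 1$ versus $|\xi|>1$ and the treatment of the compact annulus explicitly. You are also right to flag that the hypothesis must be read as ``$\ph\in\mathcal{H}$ is a weak solution of \eqref{eq:lin}'' so that Lemma~\ref{lemma:grphi} applies---the paper's own proof uses exactly this, even though the lemma statement omits it.
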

\begin{proof}
Indeed, by Lemma \ref{lemma:grphi} we know that
$$
-C_1-C_2 \ln |\xi| \le \phi(\xi) \le C_1 + C_2 \ln |\xi|,
$$
for any $\xi \in \R$ with $|\xi|\ge 1$. 
Then, the conclusion follows by formula \eqref{extLog}.
\end{proof}

Let us now consider the linearized problem \eqref{Lin}. We consider the space 
$$
\mathcal{H}(\R^2_+):=\left\{ Z \in L^1_{loc}(\R^2_+)\;:\; |\nabla Z|\in L^2(\R^2_+) \text{ and } |x|^{2(\alpha-1)} e^U Z^2\in L^1(\R^2_+)  \right\}.
$$
We say that a function $Z \in \mathcal{H}(\R^2_+)$ is a weak solution to \eqref{Lin} if
\begin{equation}\label{weakR2}
\int_{\R^2_+} \nabla Z \cdot \nabla \chi  = \int_{\partial\R^2_+} |x|^{\alpha-1} e^U Z \chi, \quad \forall \chi \in C^\infty_c(\overline{\R^2_+}). 
\end{equation}

\begin{remark}\label{testfunctions} A function $Z \in \mathcal{H}(\R^2_+)$ is a weak solution to \eqref{Lin} iff
\begin{equation}\label{weak_ex}
\int_{\R^2_+} \nabla Z \cdot \nabla \chi  = \int_{\partial\R^2_+} |x|^{\alpha-1} e^U Z \chi, \quad \forall \chi \in \mathcal{H}(\R^2_+){\cap L^\infty(\R^2_+)}.   
\end{equation}
\end{remark}
\begin{proof}
Assume that $\chi \in \mathcal{H}(\R^2_+)\cap L^\infty(\R^2_+)$. Let $\eta$ be a cut-off function such that $\eta\equiv 1$ for $|x|\le 1$, $\eta  \in C^\infty_c(B_2(0,0))$,  and $0\le \eta\le 1$. Given $R>0$, consider the functions $\eta_R(x) = \eta(\frac{x}{R})$ and $\chi_R(x) = \chi(x)\eta_R(x)$. Then, $\chi_R \in H^1(B_{2R}(0,0)\cap \R^2_+)$.  A standard density argument (see e.g. \cite[Theorem 3.22]{Adams}) shows that there exists a sequence of functions $\psi_n \in C^\infty_c(\R^2)$ such that $\psi_n \to \chi_R$ in $H^1(B_{2R}(0,0)\cap \R^2_+)$. For any $n\in \N$, we have the identity
$$
\int_{\R^2_+}   \nabla Z \cdot \nabla \psi_n  = \int_{\partial \R^2_+} |x|^{\alpha-1} e^U Z \psi_n .
$$
Using that $\psi_n \to \chi_R$ in $H^1(B_{2R}(0,0)\cap \R^2_+)$ , we easily get 
\begin{equation*}%\label{intermediate}
\int_{\R^2_+}   \nabla Z \cdot \nabla \chi_R = \int_{\partial \R^2_+} |x|^{\alpha-1} e^U Z \chi_R.
\end{equation*}
By dominated convergence, we have that 
$$
\int_{\partial \R^2_+} |x|^{\alpha-1} e^U Z \chi_R \to \int_{\partial \R^2_+} |x|^{\alpha-1} e^U Z \chi, \quad \text{ as }R\to +\infty.
$$
Moreover, we have that 
$$
\int_{\R^2_+} \nabla Z \cdot \nabla \chi_R  = \int_{ \R^2_+}  (\nabla Z \cdot \nabla \chi  )\eta_R + (\nabla Z \cdot \nabla \eta_R )\chi, \, 
$$
with 
$$\int_{ \R^2_+}  (\nabla Z \cdot \nabla \chi  )\eta_R \to \int_{\R^2_+} \nabla Z \cdot \nabla \chi  , \quad \text{ as }R\to +\infty,
$$
by dominated convergence, and
$$\begin{aligned}
\left|\int_{\R^2_+} (\nabla Z \cdot \nabla \eta_R ) \chi\right|& \le \|\chi\|_{L^\infty(\R^2_+)} \|\nabla \eta\|_{L^\infty(\R^2)}\frac{1}{R} \int_{\R^2_+ \cap( B_{2R}(0,0)\setminus B_R(0,0))} |\nabla Z| \\
& \le \sqrt{\frac{3}{2}\pi} \, \|\chi\|_{L^\infty(\R^2_+)} \|\nabla \eta\|_{L^\infty(\R^2)}\left( \int_{\R^2_+ \cap (B_{2R}(0,0)\setminus B_R(0,0))} |\nabla Z|^2 \right)^\frac{1}{2}  \to 0,
\end{aligned}
$$
as $R\to +\infty$. 
\end{proof}

\begin{remark} Using the changes of variables given in Section \ref{Sec:change}, one can actually show that for any $\chi \in \mathcal{H}(\R^2_+)$, $\int_{\partial \R^2_+}|x|^{\alpha-1} e^u \chi^2 <\infty$. Moreover, a function $Z\in \mathcal{H}(\R^2_+)$ is a weak solution to \eqref{weakR2} iff \eqref{weak_ex} holds for any $\chi$ in $\mathcal{H}(\R^2_+)$.
\end{remark}

\begin{proposition}\label{Prop:phi-Phi}
 Assume that $\ph \in \mathcal H$ {is a weak solution to \eqref{eq:lin}} and let $\Phi$ be the harmonic extension of $\ph$. Then $\Phi$ is a weak solution to \eqref{Lin}, {and \eqref{weak}} holds.
\end{proposition}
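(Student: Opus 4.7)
The strategy is to verify separately the integrability conditions in \eqref{weak} and then the weak formulation \eqref{weakR2}, using $\ph$ as a weak solution of \eqref{eq:lin} satisfying the bounds in $\mathcal H$.

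\textbf{Step 1 (gradient and boundary integrals).} By hypothesis $(-\Delta)^{1/4}\ph \in L^2(\R)$, so Proposition \ref{PropExt}(3) immediately yields $\|\nabla\Phi\|_{L^2(\R^2_+)}^2 = \|(-\Delta)^{1/4}\ph\|_{L^2(\R)}^2 < +\infty$. Interior regularity away from $0$, combined with Lemma \ref{lemma:grphi}, gives $\ph \in C(\R)$, hence Proposition \ref{PropExt}(1) produces $\Phi(x,0) = \ph(x)$ for all $x\in \R$. Consequently
$$\int_{\partial\R^2_+}|x|^{\alpha-1}e^U\Phi^2\, dx = \int_\R |x|^{\alpha-1}e^u \ph^2\, dx < +\infty$$
by the definition of $\mathcal H$.

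\textbf{Step 2 (interior weighted integral).} To estimate $\int_{\R^2_+}|z|^{2(\alpha-1)}e^{2U}\Phi^2\, dz$, I would split the domain into $B_2(0,0)\cap\R^2_+$ and its complement. On the bounded piece both $\Phi$ and $U$ extend continuously to $\overline{\R^2_+}$, so the integrand is controlled by $C|z|^{2(\alpha-1)}$, which is integrable near the origin in dimension two since $\alpha>0$ (in polar coordinates the radial integral is $\int_0^2 r^{2\alpha-1}\,dr$). On the complement, combining Lemma \ref{lemma:PropU}(2) with Lemma \ref{lemma:grPhi} yields the pointwise bound
$$|z|^{2(\alpha-1)} e^{2U} \Phi^2 \le C\, |z|^{2(\alpha-1-\beta)}\bigl(1+\ln|z|\bigr)^2.$$
Since $\beta > \alpha$ by Lemma \ref{lemma:asym u}, the radial exponent $2(\alpha-1-\beta)+1$ is strictly below $-1$, so the integral converges up to logarithmic factors.

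\textbf{Step 3 (weak formulation).} For a test function $\chi\in C^\infty_c(\overline{\R^2_+})$, the key identity to establish is the Caffarelli--Silvestre extension relation
$$\int_{\R^2_+}\nabla\Phi\cdot\nabla\chi\, dz = \int_\R \chi(x,0)\,(-\Delta)^{1/2}\ph(x)\, dx.$$
Since $\chi(\cdot,0)\in C^\infty_c(\R)$ and $\ph$ is a weak solution of \eqref{eq:lin}, the right-hand side equals $\int_\R |x|^{\alpha-1}e^u \ph\, \chi(\cdot, 0)\,dx = \int_{\partial\R^2_+}|x|^{\alpha-1}e^U\Phi\chi$, which is exactly \eqref{weakR2}.

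\textbf{Main obstacle.} The delicate point is Step 3: $\ph$ is not in $H^{1/2}(\R)$ in general, since Lemma \ref{lemma:grphi} only yields logarithmic growth at infinity, so the standard Caffarelli--Silvestre identity does not apply verbatim. My plan is to approximate $\ph$ by the truncations $\ph_R := \eta_R \ph$, where $\eta_R$ is a smooth cutoff supported in $[-2R,2R]$ and equal to $1$ on $[-R,R]$. The Poisson extension $\Phi_R$ of $\ph_R$ lies in $\dot H^1(\R^2_+)\cap L^\infty$, and Proposition \ref{PropExt}(2) (applied away from $0$, where $\ph$ is smooth) makes the extension identity routine for $\Phi_R$. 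Passing to the limit $R\to +\infty$ requires controlling $\Phi - \Phi_R$ on the support of $\chi$; this will rely on the quantitative growth estimates used in Step 2 together with the compactness of $\mathrm{supp}(\chi)$, which confines the Poisson convolution differences to tails of $\ph$ that are integrable against $|x|^{\alpha-1}e^u$.
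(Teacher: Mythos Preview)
Your Steps 1 and 2 are correct and essentially coincide with what the paper does (the paper is terser, invoking Lemma \ref{lemma:grPhi} and Proposition \ref{PropExt} for \eqref{weak} without spelling out the split).

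In Step 3 you misdiagnose the obstacle. Since $\chi\in C^\infty_c(\overline{\R^2_+})$ has compact support and $\nabla\Phi\in L^2(\R^2_+)$, the behaviour of $\ph$ at infinity is irrelevant for the pairing $\int_{\R^2_+}\nabla\Phi\cdot\nabla\chi$; there is no need to truncate $\ph$ at scale $R$, and your limit $R\to\infty$ is a detour. The genuine difficulty is at the \emph{origin}: $\ph$ need only be H\"older continuous at $x=0$ (the weight $|x|^{\alpha-1}$ is singular there), so Proposition \ref{PropExt}(2) does not give $\partial_y\Phi(x,0)$ at $x=0$, and the integration by parts yielding the Caffarelli--Silvestre identity is not justified when $\chi(0,0)\neq 0$. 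You do note parenthetically that Proposition \ref{PropExt}(2) applies ``away from $0$'', but then declare the identity ``routine for $\Phi_R$'' without addressing this point; the same gap persists for $\Phi_R$, since the truncation at infinity does nothing to regularize $\ph$ near $0$.

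The paper's proof handles precisely this: it first records that the weak identity holds for all $\chi\in C^\infty_c(\overline{\R^2_+}\setminus\{(0,0)\})$ (where Proposition \ref{PropExt}(2) applies), and then removes the restriction by multiplying a general $\chi$ by a cutoff $1-\eta(\cdot/\eps)$ vanishing near $(0,0)$. The error term is controlled by
\[
\frac{1}{\eps}\int_{\R^2_+\cap(B_{2\eps}\setminus B_\eps)}|\nabla\Phi|\;\le\; C\Bigl(\int_{\R^2_+\cap(B_{2\eps}\setminus B_\eps)}|\nabla\Phi|^2\Bigr)^{1/2}\longrightarrow 0,
\]
using only $\nabla\Phi\in L^2$. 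So the cutoff goes the other way---toward the origin, not toward infinity---and that is the missing ingredient in your argument.
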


 \begin{proof}
By Proposition \ref{PropExt}, we know that 
$$
\int_{\R^2_+} \nabla \Phi \cdot \nabla \chi  = \int_{\partial\R^2_+} |x|^{\alpha-1} e^U \Phi \chi, \quad \forall \chi \in C^\infty_c(\overline{\R^2_+}\setminus\{(0,0)\}).
$$
We use a cut-off argument to show that \eqref{weakR2} holds.  Fix $\chi \in C^\infty_c(\overline{\R^2_+})$, and let $\eta \in C^\infty_c(B_2(0,0))$ be such that $0\le \eta \le 1$ and $\eta \equiv 1 $ on $B_1(0,0)$. For any $\eps >0 $ we denote $\eta_\eps (x) = (1-\eta(\frac{x}{\eps}))$. If $\chi \in C^\infty_c(\overline{\R^2_+})$, then  $\chi_\eps:=\chi \eta_\eps\in C^\infty_c(\overline{\R^2_+}\setminus \{0\})$. Then, for any $\eps >0$, we have
$$
\int_{\R^2_+} \nabla \Phi \cdot \nabla \chi_\eps   = \int_{\partial\R^2_+} |x|^{\alpha-1} e^U \Phi \chi_\eps\,.
$$
Noting that $\nabla \chi_\eps = \eta_\eps \nabla \chi + \chi \nabla \eta_\eps $ and that 
$$
\begin{aligned}
\left|\int_{\R^2_+} (\nabla \Phi \cdot \nabla \eta_\eps)\chi  \right| &  \le  \|\chi\|_{L^\infty(\R^2_+)} \|\nabla \eta\|_{L^\infty(\R^2)}  \frac{1}{\eps} \int_{\R^2_+\cap  (B_{2\eps}(0,0)\setminus B_{\eps}(0,0))} |\nabla \Phi| \\
&\le  \|\chi\|_{L^\infty(\R^2_+)} \|\nabla \eta\|_{L^\infty(\R^2)}  \sqrt{\frac{3}{2}\pi}\left( \int_{\R^2_+\cap (B_{2\eps}(0,0)\setminus B_{\eps}(0,0))} |\nabla \Phi|^2\right)^\frac{1}{2} \to 0,
\end{aligned}
$$
as $\eps \to 0$, we can use dominated convergence theorem, since $\eta_\eps \to 1$  pointwise on $\overline{\R^2_+}\setminus\{(0,0)\}$, to get 
$$
\int_{\R^2_+} \nabla \Phi \cdot \nabla \chi    = \int_{\partial\R^2_+} |x|^{\alpha-1} e^U \Phi \chi\,.
$$
Now \eqref{weak_ex} follows by Remark \ref{testfunctions}. Finally, we observe that \eqref{weak} is a consequence of Lemma \ref{lemma:grPhi} and Proposition \ref{PropExt}.
\end{proof}

\begin{remark}
Proposition \ref{Prop:phi-Phi} shows, in particular, that Theorem \ref{Thm:1d} follows by Theorem \ref{main}. 
\end{remark}

\section{Proof of Theorem \ref{main}}\label{Sec:change}
In this section, we will transform our problem into an equivalent one via conformal transformations. First, from the upper half-space to a cone, and then to a bounded domain which will be determined by an intersection or union of balls depending on the values of $\alpha\in(0,1)\cup(1,2).$ In the cone we will obtain a linear problem with Neumann-type boundary conditions which, in the bounded domain, will become a Steklov eigenvalue problem. This will allow us to prove the main result of the paper.

For sake of simplicity we rewrite \eqref{bubble} as
\begin{equation*}%\label{bubble2}
U (z)=\ln\frac{2\alpha |\xi_2|}{|z^\alpha-\xi|^2},\ \hbox{with}\ \xi=\xi_1+ i \xi_2,\ \xi_2<0,\ \frac{\xi_1}{\xi_2}={1+\cos\alpha\pi\over\sin\alpha\pi}.\end{equation*}

\subsection{An equivalent problem on a cone}
 Let us  consider the cone
\begin{equation}\label{eq:cone}\mathcal C_\alpha:=\left\{ (r\cos  \theta,r \sin  \theta)\ :\ r>0,\ \theta \in [0,\pi\alpha)\right\}.\end{equation}
Let $F_\alpha: \R^2_+ \rightarrow \mathcal C_\alpha$ be the complex power $ z^\alpha ,$ which using polar coordinates is written as
$$F_\alpha(x,y):=(r^\alpha\cos \alpha\theta,r^\alpha\sin\alpha\theta),\ x=r\cos\theta,\ y=r\sin\theta.$$
It is known that $F_\alpha$ is a  conformal  diffeomorphism between $\R^2_+$ and $\mathcal C_\alpha$.
A straightforward computation shows that a function $\Phi$ solves the linear problem \eqref{Lin}   if and only if the function $\phi = \Phi \circ F_\alpha^{-1}$ solves the linear problem 
\begin{equation}\label{lin2}-\Delta \phi =0\ \hbox{in}\ \mathcal C_\alpha,\ \partial_\nu \phi  = e^{W_\alpha}\phi \ \hbox{on}\ \partial\mathcal C_\alpha,
\end{equation}
where 
\begin{equation}\label{wa}W_\alpha (x,y):= U \left( F_\alpha^{-1}(x,y)\right) -\ln \alpha= \ln  \frac{2|\xi_2|}{(x-\xi_1)^2+(y-\xi_2)^2}.
\end{equation} 
In fact, for sake of completeness we give a  brief proof of the claim.
If $\Phi$ solves \eqref{Lin}   for any $\chi \in \mathcal{H}(\R^2_+)\cap L^\infty(\R^2_+)$, we have  
$$ 
\int\limits_{\mathbb R^2_+}\nabla \Phi\cdot \nabla\chi = \int\limits_{\partial \mathbb R^2_+} |x|^{\alpha-1}e^U \Phi \chi.$$
First, we point out that
 $$\begin{aligned} & \int\limits_{\mathbb R^2_+}\nabla \Phi\cdot\nabla\chi dxdy\\ & \hbox{ (setting $x=r\cos\theta,\ y=r\sin\theta$,  $\hat\Phi(r,\theta):=\Phi(r\cos\theta ,r\sin\theta )$ and $\hat\chi(r,\theta):=\chi(r\cos\theta ,r\sin\theta )$)}\\
 &
 =\int\limits_0^\infty  \int\limits_0^{\pi}\left(r\partial_r\hat\Phi\partial_r\hat\chi+\frac1 r\partial_\theta\hat\Phi\partial_\theta\hat\chi\right)drd\theta\\ & \hbox{ (setting $\rho=r^\alpha,$   $\gamma=\alpha\theta$,   $\tilde\Phi(\rho,\gamma):=\hat\Phi\left(\rho^\frac1\alpha,\frac\gamma\alpha \right)$ and $\tilde\chi(\rho,\gamma):=\hat\chi\left(\rho^\frac1\alpha,\frac\gamma\alpha \right)$})\\ &
 =\int\limits_0^\infty  \int\limits_0^{\alpha\pi}\left(\rho\partial_\rho\tilde\Phi\partial_\rho\tilde\chi+\frac1 \rho\partial_\gamma\tilde\Phi\partial_\gamma\tilde\chi\right)d\rho d\gamma \\ & \hbox{ (setting $s=\rho\cos\gamma,\ t=\rho\cos\gamma$,  $\phi(s,t):=\tilde\Phi(\rho,\gamma)$ and $\upsilon(x,y):=\tilde\chi(\rho,\gamma)$)}\\
&=\int\limits_{\mathcal C_\alpha}\nabla \phi\nabla \upsilon dsdt.\end{aligned}
 $$
Next, since    on $\partial\mathbb R^2_+$  
$$e^{U(x,0)}=\frac{2\alpha |\xi_2| }{ (x^\alpha-\xi_1)^2+|\xi_2|^2},\quad \ \hbox{if}\ x\ge0,$$
and
$$
e^{U(x,0)}=\frac{2\alpha |\xi_2| }{ (|x|^\alpha\cos\alpha\pi-\xi_1)^2+(|x|^\alpha\sin\alpha\pi-\xi_2)^2},\quad \ \hbox{if}\ x\le0,$$
we also have
$$\begin{aligned}&\int\limits_{\partial \mathbb R^2_+} |x|^{\alpha-1}e^U \Phi \chi\\ &=\int\limits_{\{(x,0):x\ge0\}}\frac{2\alpha |\xi_2||x|^{\alpha-1}}{ (|x|^\alpha-\xi_1)^2+|\xi_2|^2}\Phi\chi+\int\limits_{\{(x,0):x\le0\}}\frac{2\alpha |\xi_2||x|^{\alpha-1}}{ (|x|^\alpha\cos\alpha\pi-\xi_1)^2+|\xi_2|^2}\Phi\chi\\
&=\int\limits_0^\infty \frac{2  |\xi_2|}{ (\sigma-\xi_1)^2+|\xi_2|^2}\phi\upsilon d\sigma  +
 \int\limits_0^\infty \frac{2  |\xi_2|}{ (\sigma\cos\alpha\pi-\xi_1)^2+(\sigma\sin\alpha\pi-\xi_2)^2}\phi\upsilon d\sigma \\
 &=\int\limits_{\partial_-\mathcal C_\alpha}e^{W_\alpha(s,t)}\phi\upsilon+\int\limits_{\partial_+\mathcal C_\alpha}e^{W_\alpha(s,t)}\phi\upsilon=\int\limits_{\partial\mathcal C_\alpha}e^{W_\alpha(s,t)}\phi\upsilon,
 \end{aligned} $$
 because 
 \begin{equation}\label{bordoc}\partial\mathcal C_\alpha:=\underbrace{\{(\sigma,0)\ :\ \sigma\ge0\}}_{:=\partial_-\mathcal C_\alpha  }\cup\underbrace{\{(\sigma\cos\pi\alpha,\sigma\sin\pi\alpha)\ :\  \sigma\ge0\}}_{:=\partial_+\mathcal C_\alpha  }.\end{equation}
 Finally, we deduce that for any $\upsilon \in L^{\infty}(\mathcal C_\alpha)$ and thus, such that $\int\limits_{\partial \mathcal C_\alpha}  e^W \upsilon^2<+\infty$, if $v$ satisfies 
\begin{equation}\label{weak1}\int\limits_{\mathcal C_\alpha}|\nabla \upsilon|^2<+\infty,
\end{equation}
then,
$$\begin{aligned}
0=\int\limits_{\mathcal C_\alpha}\nabla \phi\nabla\upsilon-\int\limits_{\partial\mathcal C_\alpha}e^{W_\alpha(s,0)}\phi\upsilon,
\end{aligned} $$
that is, $\phi$ solves \eqref{lin2}.

\subsection{An equivalent problem on a bounded domain}

Let us consider the map 
\begin{equation}\label{eq:G}
G_\alpha(x,y) = \left(\frac{|\xi|^2- x^2-y^2} {(x-\xi_1)^2+(y-\xi_2)^2}, \frac{2(y \xi_1-  x\xi_2) }{(x -\xi_1)^2+(y-\xi_2)^2} \right),\
(x,y)\in\mathbb R^2\setminus\{\xi\}.\end{equation}

\begin{lemma}\label{LemmaPG}
	The function $G_\alpha$ given by \eqref{eq:G} satisfies the following properties:
\begin{enumerate}
\item\label{PCConf} $G$ is a conformal diffeomorphism between $\R^2\setminus\{\xi\}$ and $\R^2\setminus\{(-1,0)\}$.
\item\label{PGJacobian} The Jacobian of $G$ is given by (see \eqref{wa})
\begin{equation}\label{jaco}
 JG_\alpha(x,y) = \frac{4 |\xi|^2}{\left((x -\xi_1)^2+(y-\xi_2)^2\right)^2}  =\frac{|\xi|^2}{|\xi_2|^2} e^{2 W_\alpha(x,y)}.
\end{equation}
\item \label{cone} The image of the cone \eqref{eq:cone}, i.e., $G_\alpha(\mathcal C_\alpha)$ is
\begin{equation*}%\label{o+}
\Omega_\alpha:=\mathscr D_\alpha^-\cap\mathscr D_\alpha^+\ \hbox{if}\ \alpha\in(0,1)\quad \hbox{or}\quad
\Omega_\alpha:=\mathscr D_\alpha^-\cup\mathscr D_\alpha^+\ \hbox{if}\ \alpha\in(1,2),
\end{equation*}
where
\begin{equation}\label{circ}\mathscr D_\alpha^\pm:=\left\{(x,y)\in\mathbb R^2\ :\ x^2+(y\pm \tau_\alpha)^2\le 1+\tau_\alpha^2\right\},\
 \tau_\alpha:={1+\cos\alpha\pi\over\sin\alpha\pi}.\end{equation}
\end{enumerate}
\end{lemma}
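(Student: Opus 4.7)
The key observation is that, after identifying $(x,y)\in\mathbb R^2$ with $z=x+iy\in\mathbb C$ and $\xi=\xi_1+i\xi_2$, the map $G_\alpha$ is a M\"obius transformation in disguise. Indeed, the algebraic identity
\[
|\xi|^2-|z|^2-2i\,\mathrm{Im}(\bar z\xi)=-(\xi+z)\overline{(z-\xi)}
\]
after dividing by $|z-\xi|^2$ yields
\[
G_\alpha(z)=\frac{\xi+z}{\xi-z}.
\]
This complex reformulation makes the rest of the proof transparent.

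Property (\ref{PCConf}) is then immediate from the theory of M\"obius transformations: $G_\alpha$ extends to a conformal automorphism of the Riemann sphere which sends $\xi\mapsto\infty$ and $\infty\mapsto -1$, and therefore restricts to a conformal diffeomorphism between $\mathbb R^2\setminus\{\xi\}$ and $\mathbb R^2\setminus\{(-1,0)\}$. For (\ref{PGJacobian}), holomorphicity gives $JG_\alpha=|G_\alpha'(z)|^2$; a direct calculation yields $G_\alpha'(z)=2\xi/(\xi-z)^2$, and comparing with the explicit form of $W_\alpha$ in \eqref{wa} produces \eqref{jaco}.

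The substantive part of the lemma is (\ref{cone}). The boundary of $\mathcal C_\alpha$ is the union of the two rays $\partial_\pm \mathcal C_\alpha$ described in \eqref{bordoc}, each a generalized circle on the Riemann sphere through $0$ and $\infty$, so their images under $G_\alpha$ are generalized circles through $G_\alpha(0)=1$ and $G_\alpha(\infty)=-1$. To identify which circles, I would evaluate $G_\alpha$ at a convenient third point: writing $\xi=-\rho e^{i\pi\alpha/2}$, the choice $\sigma=\rho$ gives
\[
G_\alpha(\rho)=i\tan\tfrac{\pi\alpha}{4}, \qquad G_\alpha(\rho e^{i\pi\alpha})=-i\tan\tfrac{\pi\alpha}{4}.
\]
Exploiting the half-angle identity $\tau_\alpha=\cot(\pi\alpha/2)$ arising from \eqref{circ}, one verifies that these two points lie on $\partial\mathscr D_\alpha^+$ and $\partial\mathscr D_\alpha^-$ respectively. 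Since the image of each ray is a connected curve from $1$ to $-1$, this pins down the two image arcs and hence the full boundary $\partial G_\alpha(\mathcal C_\alpha)$.

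It then remains to decide which side of this boundary the image occupies. A useful preliminary check is that $\xi\notin\mathcal C_\alpha$: indeed $\theta_0=\pi\alpha/2+\pi>\pi\alpha$ for every $\alpha\in(0,2)$, so $G_\alpha(\mathcal C_\alpha)$ avoids $\infty$ and is therefore bounded, hence one of the two bounded regions cut off by the arcs. The case split in the statement reflects the change of sign of $\tau_\alpha$ at $\alpha=1$: for $\alpha\in(0,1)$ the disk $\mathscr D_\alpha^-$ sits above and $\mathscr D_\alpha^+$ below the horizontal axis, the two arcs are the ``short'' ones, and they bound the lens $\mathscr D_\alpha^-\cap\mathscr D_\alpha^+$; for $\alpha\in(1,2)$ the disks are swapped, the two arcs become ``long'' (reaching $y=\pm\tan(\pi\alpha/4)$ with $|\tan(\pi\alpha/4)|>1$), and they instead bound the union $\mathscr D_\alpha^-\cup\mathscr D_\alpha^+$. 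I expect this case analysis to be the main obstacle, and I would settle it by tracking the orientation preserved by the biholomorphism $G_\alpha$ along the boundary, traversed from $G_\alpha(\infty)=-1$ to $G_\alpha(0)=1$ along the image of $\partial_-\mathcal C_\alpha$ and back along the image of $\partial_+\mathcal C_\alpha$; conformality then determines uniquely which of the two candidate regions lies on the left.
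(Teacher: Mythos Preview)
Your treatment of parts (\ref{PCConf}) and (\ref{PGJacobian}) is identical to the paper's: both identify $G_\alpha$ as the M\"obius map $z\mapsto -(z+\xi)/(z-\xi)$ and read off the Jacobian as $|g'(z)|^2$.

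For part (\ref{cone}) your argument is correct but the paper takes a different and somewhat slicker route. Rather than tracking the two boundary \emph{arcs} and then invoking orientation to decide which of the two candidate regions is the image, the paper first rewrites the cone itself as a Boolean combination of two \emph{half-planes}: setting $\Pi_\alpha=\{-x\sin\pi\alpha+y\cos\pi\alpha\le 0\}$, one has $\mathcal C_\alpha=\Pi_\alpha\cap\R^2_+$ for $\alpha\in(0,1)$ and $\mathcal C_\alpha=\Pi_\alpha\cup\R^2_+$ for $\alpha\in(1,2)$. It then shows that $G_\alpha$ sends $\R^2_+$ onto one of the disks $\mathscr D_\alpha^\pm$ and $\Pi_\alpha$ onto the other (boundary-to-boundary is a direct computation, and the image must be the bounded side because $\xi$ lies in neither half-plane). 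Since $G_\alpha$ is a bijection of the sphere it preserves intersections and unions, and the description of $\Omega_\alpha$ follows immediately, with the intersection/union dichotomy inherited from the cone decomposition rather than from an orientation argument. Your three-point determination of the image circles plus the orientation check would also work, but the half-plane decomposition bypasses the delicate part you flagged as ``the main obstacle'' entirely.
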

\begin{center}\includegraphics[width=0.7\textwidth]{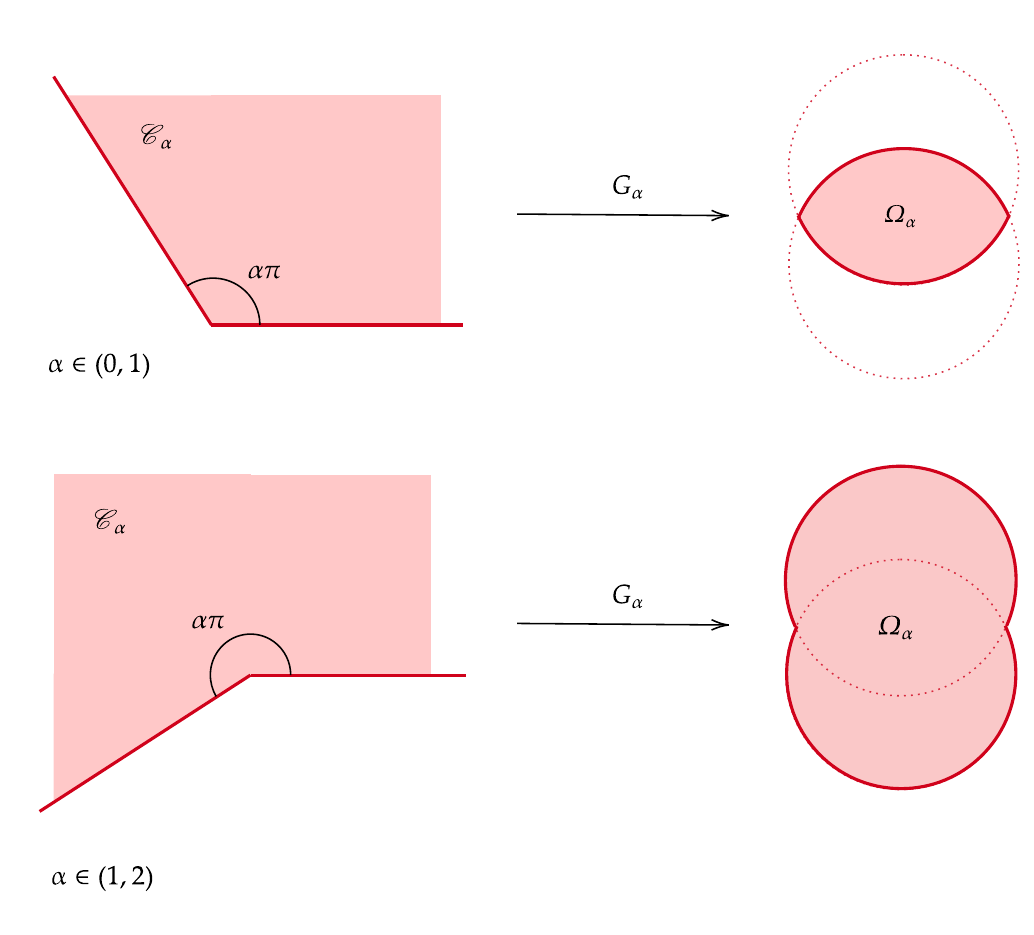}
\end{center}
\begin{proof}
First of all, we remind that
$\frac{\xi_1}{\xi_2}=\tau_\alpha$ and  $\frac{|\xi|}{|\xi_2|}=\sqrt{1+\tau_\alpha^2}.$
\\
Now, \eqref{PCConf}  follows from the complex representation of $G_\alpha$ as 
$$G_\alpha(x,y)=g(z) = -\frac{z+\xi }{z- \xi}.$$ Property \eqref{PGJacobian} follows from a direct computation: 
$$
\mathtt JG_\alpha(x,y) = |g'(z)|^2 = \frac{4|\xi|^2}{|z-\xi|^4}.
$$
To prove \eqref{cone}, first we note that if $\Pi_\alpha:=\{(x,y)\in \R^2 \;:\;  - x \sin \pi \alpha + y \cos  \pi \alpha \le 0\}$, the cone is given by
\begin{equation}\label{cone2}
\mathcal C_\alpha =
\Pi_\alpha \cap \R^2_+,\ \hbox{if}\ 0 < \alpha < 1\ \hbox{ and }\ 
\mathcal C_\alpha =\Pi_\alpha \cup \R^2_+, \ \hbox{if}\ 1 < \alpha <2. 
\end{equation}
The claim follows once we prove that
\begin{equation}\label{cone3}
G_\alpha(\R^2_+)= \mathscr D_\alpha^-\ \hbox{and}\ G_\alpha(\Pi_\alpha) =\mathscr D_\alpha^+.\end{equation}

Next, we observe that $G_\alpha$ maps the boundary of the half-spaces  $\mathbb R^2_+$ and $\Pi_\alpha$ into the boundary of the two disks of radius $\frac{|\xi|}{|\xi_2|}$
centered at the points {$\left(0,\frac{\xi_1}{\xi_2}\right)$} and  ${\left(0,-\frac{\xi_1}{\xi_2}\right)}$, respectively. Indeed
  a direct  computation shows that
$$
\left|G_\alpha(x,0)+\left(0, \frac{\xi_1}{\xi_2}\right)\right|^2 \hspace{-0.2cm}= \frac{(x+\xi_1)^2 +\xi_2^2}{(x-\xi_1)^2+\xi_2^2}-\frac{4 \xi_1 x}{(x-\xi_1)^2+\xi_2^2}+ \frac{\xi_1^2}{\xi_2^2} = 1+\frac{\xi_1^2}{\xi_2^2} = \frac{|\xi|^2}{\xi_2^2},\ \hbox{for any}\ x\in\mathbb R,
$$
 and  
$$
\left|G_\alpha(r\cos\pi \alpha , r \sin \pi \alpha) -\left(0, \frac{\xi_1}{\xi_2}\right)\right|^2 = 1+\frac{\xi_1^2}{\xi_2^2}=\frac{|\xi|^2}{\xi_2^2},\ \hbox{for any}\ r>0.
$$
Finally, we point out that   $G_\alpha$ maps the point $\xi$ at $\infty$. Moreover $\xi\not\in 
 \mathbb R^2_+$ because $\xi_2<0$ and $\xi\not\in\Pi_\alpha$ because
 $$- \xi_1 \sin \pi \alpha + \xi_2 \cos  \pi \alpha=\xi_2\left(-{1+\cos\alpha\pi\over\sin\alpha\pi}\sin\pi\alpha+\cos\pi\alpha\right)=-\xi_2>0.$$
Therefore, by \eqref{cone2} together with the fact that $G_\alpha$ maps the boundary of the half-spaces into the boundary of the disks, we deduce
\eqref{cone3}.

\end{proof}

Let  $\psi  (x,y)= \phi \left(G_\alpha^{-1}(x,y)\right)$.  Thanks to Lemma \ref{LemmaPG} we see that 
$ \phi $ solves the linear problem \eqref{lin2} if and only if $\psi$
is a solution to the Steklov problem
\begin{equation}\label{lin3}\Delta \psi  =0\ \hbox{in}\ \Omega_\alpha,\ \partial_\nu \psi= \frac{1}{\sqrt{1+\tau_\alpha^2}} \psi  \ \hbox{in}\ \partial\Omega_\alpha. \end{equation}
For the sake of completeness, let us prove the claim.
If $\phi$ solves \eqref{lin2}, for any $\upsilon\in L^{\infty}(\mathcal C_\alpha)$ satisfying \eqref{weak1},  it holds
$$\begin{aligned}
0=\int\limits_{\mathcal C_\alpha}\nabla \phi\nabla\upsilon-\int\limits_{\partial\mathcal C_\alpha}e^{W_\alpha(s,0)}\phi\upsilon.
\end{aligned}$$ 
We set $\Upsilon=\upsilon\circ G_\alpha^{-1}$. On the one hand, %we point out that 
via the change of variables $G_\alpha^{-1}(x,y)=(s,t)$ (taking into account that $G_\alpha$ is a conformal map), we have
$$\begin{aligned}
 \int\limits_{\mathcal C_\alpha}\nabla \phi(s,t)\cdot \nabla\upsilon(s,t)dsdt &= \int\limits_{\Omega_\alpha}\mathtt {det} (D G^{-1}_\alpha)(x,y)\nabla \phi\left(G_\alpha^{-1}(x,y)\right) \cdot \nabla\upsilon\left(G_\alpha^{-1}(x,y)\right)dxdy\\ &= \int\limits_{\Omega_\alpha}D G^{-1}_\alpha(x,y)\nabla \phi\left(G_\alpha^{-1}(x,y)\right)d G^{-1}_\alpha(x,y)\cdot \nabla\upsilon\left(G_\alpha^{-1}(x,y)\right)dxdy\\
 &=\int\limits_{\Omega_\alpha}\nabla \psi(x,y)\cdot \nabla\Upsilon(x,y)dxdy.
 \end{aligned}$$ 
On the other hand, we can assert that
$$\begin{aligned}
 \int\limits_{\mathcal C_\alpha}  \nabla \phi(s,t) \cdot \nabla \upsilon(s,t)dsdt
&=\int\limits_{\partial_-\mathcal C_\alpha}e^{W_\alpha(s,t)}\phi\upsilon+\int\limits_{\partial_+\mathcal C_\alpha}e^{W_\alpha(s,t)}\phi\upsilon\\
&= \frac{1}{\sqrt{1+\tau_\alpha^2}}\int\limits_{\partial\mathscr D_\alpha^-{\cap \partial \Omega_\alpha}}\psi\Upsilon+ \frac{1}{\sqrt{1+\tau_\alpha^2}}\int\limits_{\partial\mathscr D_\alpha^+ {\cap \partial \Omega_\alpha}}\psi\Upsilon\\
&= \frac{1}{\sqrt{1+\tau_\alpha^2}}\int\limits_{\partial\Omega_\alpha}\psi\Upsilon,
\end{aligned}$$
because of \eqref{jaco}, \eqref{circ}, \eqref{bordoc} and
 $$G_\alpha\left( \mathcal C_\alpha\right)=\Omega_\alpha, \ G_\alpha\left(\partial_+\mathcal C_\alpha\right)=\partial\mathscr D_\alpha^+{\cap \partial \Omega_\alpha}\ \hbox{and}\ G_\alpha\left(\partial_-\mathcal C_\alpha\right)=\partial\mathscr D_\alpha^-{\cap \partial \Omega_\alpha}.$$
Therefore,
\begin{equation}\label{EqonOmega}0= \int\limits_{ \Omega_\alpha}\nabla \psi\nabla \Upsilon-\frac{1}{\sqrt{1+\tau_\alpha^2}}\int\limits_{\partial\Omega_\alpha}\psi\Upsilon,
\end{equation}
for any $\Upsilon\in H^1(\Omega_\alpha)\cap L^\infty(\Omega_\alpha)$.
 Since $H^1(\Omega_\alpha)\cap L^\infty(\Omega_\alpha)$ is a dense subspace of $H^1(\Omega_\alpha)$, \eqref{EqonOmega} holds for any $\Upsilon\in H^1(\Omega_\alpha)$, namely $\psi$ solves \eqref{lin3}.
\subsection{Proof of Theorem \ref{main}: conclusion}\label{conclusion}
It is immediate to check that $\mu_\alpha:= \frac{1}{\sqrt{1+\tau_\alpha^2}}$ is an eigenfunction of the Steklov problem \eqref{lin3}
and also that 
\begin{equation*}%\label{psi0}
	 \psi(x,y)=x,\end{equation*}
is an associate eigenfunction.

In Section \ref{Sec:Steklov}  we prove that $\mu_\alpha$ is simple for all $\alpha\in(0,1)\cup(1,2)$. Thus, for {$\alpha\in (0,1)\cup(1,2)$}, $\psi$ is the unique eigenfunction corresponding to $\mu_\alpha$ and, using all the previous arguments, we deduce that all the solutions to \eqref{Lin} are a scalar multiple of the function
$$
\Phi(x,y) =    (\psi \circ G_\alpha\circ F_\alpha) (x,y) = \frac{|\xi|^2- (x^2+y^2)^\alpha}{|(x+iy)^\alpha - \xi_1-i \xi_2|^2},
$$
concluding the proof of Theorem \ref{main}.

 {Note that the conditions $\frac{\xi_1}{\xi_2}=\frac{1+\cos(\alpha\pi)}{\sin(\alpha\pi)}$ and $\xi_2<0$ that we have used in this Section are equivalent to the conditions given in \eqref{bubble} with $z_0=\xi$ and $\rho=|\xi|$. Thus $\Phi$ reads as \eqref{eq:Phi_cond} with $\mathfrak{c}=-\frac{1}{\rho}$.	}

\section{On the simplicity of the eigenvalue of the Steklov problem}\label{Sec:Steklov}
This last section is devoted to the study of the Steklov eigenvalue problem \eqref{lin3} and, in particular, to proving the simplicity of a distinguished eigenvalue, which will allow us to conclude the proof of Theorem \ref{main}, as anticipated in Section \ref{conclusion}.

Let $\Omega$ be a bounded Lipschitz domain in $\mathbb R^n$ and denote by $\{\mu_k\}_{k=1}^{\infty}$ its Steklov eigenvalues. Then, the spectrum of the homothetic domain $c\Omega$, $c>0$, is given by $\{c^{-1}\mu_k\}_{k=1}^{\infty}$. Applying this observation to $c\Omega_{\alpha}$ with $c^2=\frac{1}{1+\tau_{\alpha}^2}$ we conclude that the simplicity of the eigenvalue $\frac{1}{\sqrt{1+\tau_{\alpha}^2}}$ on $\Omega_{\alpha}$ for $\alpha\in(0,1)\cup(1,2)$ is equivalent to the simplicity of the eigenvalue $1$ on the following two families of domains:
$$
\mathscr D_{\ell}^+\cap\mathscr D_{\ell}^-{\rm\ \ \ and\ \ \ }\mathscr D_{\ell}^+\cup\mathscr D_{\ell}^-,
$$
 where $\mathscr D_{\ell}^{\pm}=\{(x,y)\in\mathbb R^2:x^2+(y\pm\ell)^2<1\}$ and $\ell\in(0,1)$. In fact, the function $\psi=x$ is always an eigenfunction with eigenvalue $1$ for any domain of each family.

\subsection{The eigenvalue $1$ of the intersection}\label{sub:intersection}

Let  $\Omega_{\ell}:=\mathscr D_{\ell}^+\cap\mathscr D_{\ell}^-$, $\ell\in(0,1)$ (see Figure \ref{figM1}, left). We consider the Steklov eigenvalues on $\Omega_{\ell}$ and denote them by $\mu_k(\ell)$.  As usual, $\mu_1(\ell)=0$ with constant eigenfunctions. We  prove that in this situation $1$ is the second eigenvalue and it is simple for all $\ell\in(0,1)$.

\begin{figure}
\includegraphics[width=0.8\textwidth]{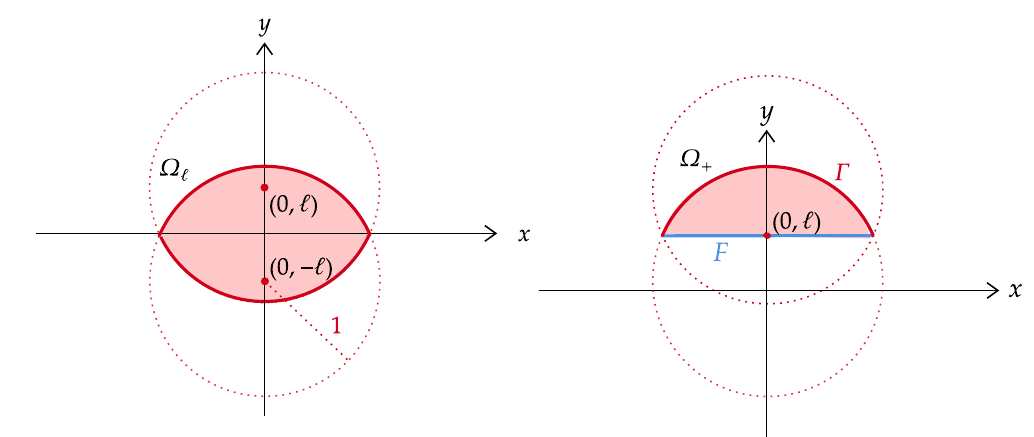}
\caption{}
\label{figM1}
\end{figure}

\begin{proposition}\label{prop_intersection}
For any $\ell\in(0,1)$, $\mu_2(\ell)=1$ and it is simple.
\end{proposition}
\begin{proof}
Since $\Omega_{\ell}$ is symmetric with respect $x$ and $y$, any Steklov eigenfunction $\psi$ is written as $\psi=\psi_{ee}+\psi_{oo}+\psi_{eo}+\psi_{oe}$, where $\psi_{ee}$ is even with respect to $x$ and $y$, $\psi_{oo}$ is odd with respect to $x$ and $y$, $\psi_{eo}$ is even with respect to $x$ and odd with respect to $y$ and $\psi_{oe}$ is odd with respect to $x$ and even with respect to $y$. In other words, any eigenspace is spanned by eigenfunctions of these types.  Now, any second eigenfunction $\psi$ on $\Omega_{\ell}$ has exactly two nodal domains by the classical Courant's theorem for the Steklov problem, see e.g., \cite{ks1}. If $\psi$ is a second eigenfunction, it cannot be of the form $\psi_{oo}$, because otherwise it would have  at least four nodal domains. It cannot be of the form $\psi_{ee}$. In fact, it cannot have interior nodal domains, since it is harmonic. This implies that {there} are at least three nodal domains, which is not possible. 

{Suppose that} $x$ is not a second eigenfunction and there {exists} a second eigenfunction {$\psi_{oe}$}, then $\int_{\partial\Omega_{\ell}}\psi_{oe}x=0$ (eigenfunctions corresponding to different eigenvalues are orthogonal in $L^2(\partial\Omega_{\ell})$). This is not possible since $\psi_{oe}$ has two nodal domains: 
$\Omega_{\ell}\cap\{(x,y)\in\mathbb R^2:x>0\}$ and 
 $\Omega_{\ell}\cap\{(x,y)\in\mathbb R^2:x<0\}$. For the same reason, if $x$  is a second eigenfunction, there are no other $\psi_{oe}$ second eigenfunctions. The same reasoning tells us that we can have only one linearly independent second eigenfunction of type $\psi_{eo}$. Altogether, we have proved that $\mu_2(\ell)$ has multiplicity at most $2$, and that the only three possibilities are that it is simple with eigenfunction $x$, it is simple with eigenfunction $\psi_{eo}$, or it is double with eigenspace  spanned by $\{x,\psi_{eo}\}$ for some eigenfunction $\psi_{eo}$.

%{\color{gab} Some comments here: I believe the remaining part of the proof is correct but perhaps it is simpler to obtain the result by testing the equation against the test function $v(x,y) = y+l$. We could write something like: 
%
%To conclude the proof we show that there is no second eigenfunction $\psi_{eo}$. Let $\Omega_+:=\Omega_{\ell}\cap\{y>0\}$. Note that $\partial\Omega_+=\Gamma\cup F$, where $\Gamma=\partial\Omega_+\cap\{(x,y)\in\partial \Omega \;:\; y>0\}$  and $F=\partial\Omega_+\cap\{(x,y)\in\mathbb R^2:y=0\}$. We consider the function $v(x,y) = y + \ell$ which is harmonic in $\Omega_+$ and satisfies $\partial_\nu v = v$ on $\Gamma$. Then
%$$
%0 = \int_{\partial \Omega_+} \partial_\nu \psi_{eo}\, v - \psi_{eo} \partial_\nu v  = (\mu_2(\ell) - 1) \int_{\Gamma} \psi_{eo} v + \ell \int_{F} \partial_\nu \psi_{eo}
%$$
%Without loss of generality, we may assume that $\psi_{eo}>0 $ in $\Omega^+$. By Hopf lemma, we also have $\partial_\nu \psi_{eo} < 0$ on $F$. Since $\mu_2(\ell)\le 1$, we get that 
%$$
%(\mu_2(\ell) - 1) \int_{\Gamma} \psi_{eo} v + \ell \int_{F} \partial_\nu \psi_{eo}<0,
%$$
%which is a contradiction. Is this argument correct? 
%
%
%
%
% } 

 To conclude the proof, we show that the Rayleigh quotient of any eigenfunction $\psi_{eo}$  is strictly larger than $1$. 
 To do so, we compare the Steklov eigenvalue associated with some eigenfunction $\psi_{eo}$ with an eigenvalue of the Laplacian on the boundary by following the strategy of \cite{PS}. This is achieved by using a Rellich-Pohozaev identity.

 More precisely, let $\Omega_+:=\Omega_{\ell}\cap\{y>0\}$. It is convenient to  translate the origin of the coordinate system in $(0,-\ell)$ (see Figure \ref{figM1}, right). Let $\psi_{eo}$ be an even-odd eigenfunction with eigenvalue $\mu$. Let $p=(x,y)$ be the position vector in the new coordinate system (see Figure \ref{figM1}, right). In the new coordinate system $\partial\Omega_+=\overline\Gamma\cup\overline F$, where $\Gamma=\partial\Omega_+\cap\{(x,y)\in\mathbb R^2:y>\ell\}$  and $F=\partial\Omega_+\cap\{(x,y)\in\mathbb R^2:y=\ell\}$. If $\nu$ denotes the outer unit normal to $\Omega_+$ we have that $\nu=p$ on $\Gamma$ and $\nu=(0,-1)$ on $F$.  Moreover { since $\psi_{eo}=0$ on $F$% by oddness in $y$
 	, we have }$\partial_x\psi_{eo}=0$ on $F$. 
To simplify the notation, in the following formulas we write  $\psi$ for $\psi_{eo}$. We have

\begin{multline}\label{rellich1}
	0=\int_{\Omega_+}\Delta \psi\,p\cdot\nabla \psi=\int_{\Gamma}\partial_{\nu}\psi\,p\cdot \nabla \psi+\int_{F}\partial_{\nu}\psi\,p\cdot \nabla \psi-\int_{\Omega_+}\nabla \psi\cdot\nabla(p\cdot\nabla \psi)\\
	=\mu^2\int_{\Gamma}\psi^2-\ell\int_F(\partial_y\psi)^2-\int_{\Omega_+}|\nabla \psi|^2-\frac{1}{2}\int_{\Omega_+}\nabla(|\nabla \psi|^2)\cdot p\\
	=\mu^2\int_{\Gamma}\psi^2-\ell\int_F(\partial_y\psi)^2-\frac{1}{2}\int_{\Gamma}|\nabla \psi|^2(p\cdot\nu)-\frac{1}{2}\int_{F}|\nabla \psi|^2(p\cdot\nu)\\ 
	=\mu^2\int_{\Gamma}\psi^2-\ell\int_F(\partial_y\psi)^2-\frac{\mu^2}{2}\int_{\Gamma}\psi^2-\frac{1}{2}\int_{\Gamma}|\nabla_{\Gamma}\psi|^2+\frac{\ell}{2}\int_F(\partial_y\psi)^2.
\end{multline}
From \eqref{rellich1} we deduce
$$
\mu^2\geq \frac{\int_{\Gamma}|\nabla_{\Gamma}\psi|^2}{\int_{\Gamma}\psi^2}.
$$
Here $\nabla_{\Gamma}\psi$ stands for the tangential component of $\nabla\psi$ along $\Gamma$. Since $\psi=0$ at the endpoints of $\Gamma$, we have that
$$
\mu^2\geq \frac{\int_{\Gamma}|\nabla_{\Gamma}\psi|^2}{\int_{\Gamma}\psi^2}\geq\min_{0\ne u\in H^1_0(\Gamma)}\frac{\int_{\Gamma}|\nabla_{\Gamma}u|^2}{\int_{\Gamma}u^2}=\lambda_1^D(\Gamma)=\frac{\pi^2}{|\Gamma|^2}>1,
$$
where $\lambda_1^D(\Gamma)$ is the first Dirichlet eigenvalue on $\Gamma$ and $|\Gamma|<\pi$ is the length of $\Gamma$. This concludes the proof.
\end{proof}

\subsection{The eigenvalue $1$ of the union}\label{sub:union}

Let $\Omega_{\ell}:=\mathscr D_{\ell}^+\cup\mathscr D_{\ell}^-$ (see Figure \ref{fig0}, left). As before, by $\mu_k(\ell)$ we denote the Steklov eigenvalues of $\Omega_{\ell}$. We  prove that in this situation $1$ is the third eigenvalue and it is simple for all $\ell\in(0,1)$.
\begin{figure}
\includegraphics[width=0.7\textwidth]{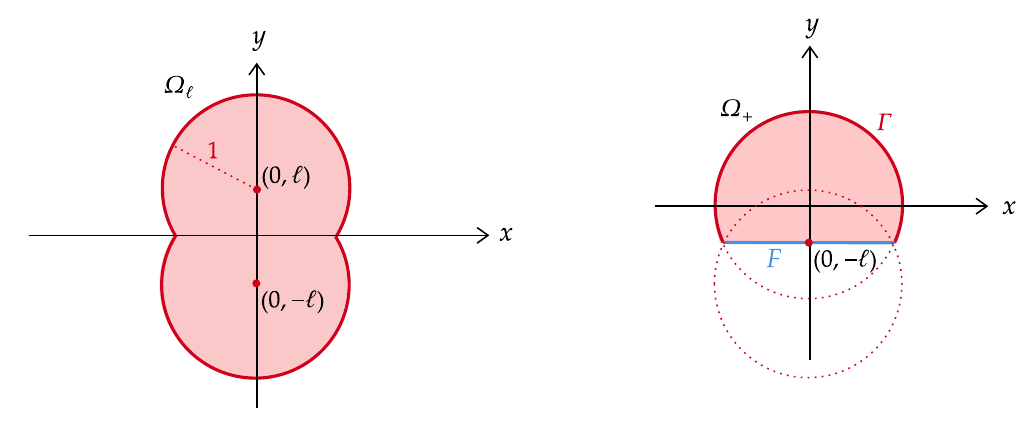}
\caption{}
\label{fig0}
\end{figure}
\begin{proposition}\label{prop_union}
For any $\ell\in(0,1)$:
\begin{enumerate}[i)]
\item $\mu_2(\ell)<1$ and it is simple;
\item $\mu_3(\ell)=1$ for all $\ell\in(0,1)$ and it is simple.
\end{enumerate}
\end{proposition}
\begin{proof}
We first prove $i)$. We recall Weinstock's inequality \cite{weinstock}: for any simply connected domain $\Omega$ of $\mathbb R^2$ the second Steklov eigenvalue $\mu_2$ satisfies
$$
\mu_2\leq\frac{2\pi}{|\partial\Omega|}
$$
and the equality holds if and only if $\Omega$ is a disk. In our case we have
$$
\mu_2(\ell)\leq\frac{2\pi}{|\partial\Omega_{\ell}|}<1
$$
since $|\partial\Omega_{\ell}|>2\pi$.
Next, we prove that the second eigenvalue is simple and a second eigenfunction is odd with respect to $y$. Let $\psi$ be a second eigenfunction. As in Subsection \ref{sub:intersection}, we see that
$\psi=\psi_{ee}+\psi_{oo}+\psi_{eo}+\psi_{oe}$.
Moreover, $\psi$ has exactly two interior nodal domains, hence we cannot have $\psi_{ee}$ and $\psi_{oo}$ second eigenfunctions. We observe that we cannot have $\psi_{oe}$ second eigenfunctions either. In fact, $x$ is a Steklov eigenfunction with eigenvalue $1$, and it is not a second Steklov eigenfunction. Then $x$ is orthogonal in $L^2(\partial\Omega_{\ell})$ to a second eigenfunction (and to the first eigenfunction, the constant). If we have a $\psi_{oe}$ second eigenfunction, then $\int_{\partial\Omega_{\ell}}\psi_{oe}x=0$. But a second eigenfunction of type $\psi_{oe}$ has two nodal domains: $\Omega_{\ell}\cap\{(x,y)\in\mathbb R^2:x>0\}$ and $\Omega_{\ell}\cap\{(x,y)\in\mathbb R^2:x<0\}$, hence $\int_{\partial\Omega_{\ell}}\psi_{oe}x\ne 0$. We conclude that a second eigenfunction can be only of type $\psi_{eo}$ and that it is simple. Indeed, following the same argument as above, we see that we cannot have more than one linearly independent $\psi_{eo}$ eigenfunction with exactly two nodal domains. This concludes the proof of $i)$.

\medskip

We now prove $ii)$. The proof is divided in three main steps.\\
{\bf Step 1.} The eigenvalues behave continuously for $\ell\in(0,1)$ (see e.g., \cite{Bogosel,Bucur_steklov,FL_steklov}), and in particular, as $\ell\to 0$, they converge to the Steklov eigenvalues of the unit disk:
$$
0,1,1,2,2,3,3,\cdots
$$
The proof follows from classical spectral stability results for the Steklov problem, {see \cite[Theorem $3.5$]{Bogosel}, see also \cite{Bucur_steklov,FL_steklov}}. Precisely, $\mu_1(\ell)=0$ for all $\ell$, $\mu_2(\ell)\to 1$, $\mu_3(\ell)\to 1$ and $\mu_4(\ell)\to 2$ as $\ell\to 0$.
This implies that $ii)$ holds for $\ell\in(0,\ell_0)$ for some $\ell_0\in(0,1)$. 
%{\color{blue}This implies that there exists $\ell_0\in(0,1)$ such that $ii)$ holds for $\ell\in(0,\ell_0)$.??}{\color{gab} (Yes, because we know that for small $\ell$, $\mu_3(\ell)$ is close to $1$ and $\mu_4(l)$ is close to $2$. Since we proved $i)$, we know that $0<\mu_2 <\mu_3 <1+\eps < 2-\eps < \mu_4$ for $\ell$ small enough. Since $1$ is an eigenvalue and $\mu_2<1$, we must have $\mu_3 =1$.)}

Let $\psi$ be a  Steklov eigenfunction. It can be written as $\psi=\psi_{ee}+\psi_{oo}+\psi_{eo}+\psi_{oe}$. The eigenfunction $x$ corresponding to $\mu=1$ is of type $\psi_{oe}$. In the next steps we show that there are no other eigenfunctions  $\psi_{oe}$ in the eigenspace of $\mu=1$ for all $\ell\in(0,1)$, and that there are no eigenfunctions of type $\psi_{ee},\psi_{oo},\psi_{oe}$ in the eigenspace of $\mu=1$ for all $\ell\in(0,1)$. This fact and the continuity of the eigenvalues imply the validity of $ii)$. 

{\bf Step 2.} We prove that any $\psi_{ee}$ eigenfunction has eigenvalue strictly larger than $1$. To do so we follow the same strategy of Subsection \ref{sub:intersection} using a Rellich-Pohozaev identity as in \cite{PS}. More precisely, let $\Omega_+:=\Omega_{\ell}\cap\{(x,y)\in\mathbb R^2:y>0\}$. The restriction of $\psi_{ee}$ to $\Omega_+$ satisfies $-\partial_y\psi_{ee}=0$ at $y=0$ due to parity.
As in Subsection \ref{sub:union}, it is convenient to translate the origin of the coordinate system in $(0,\ell)$ (see Figure \ref{fig0}, right).
In this new system of coordinates, let $p=(x,y)$ be the position vector. Note that $\partial\Omega_+=\overline\Gamma\cup \overline F$ where $\Gamma=\partial\Omega_+\cap\{(x,y)\in\mathbb R^2:y>{-\ell}\}$  and $F=\{\partial\Omega_{\ell}\cap y={-\ell}\}$. Also, if $\nu$ is the outer unit normal to $\partial\Omega_+$, we have $\nu=p$ on $\Gamma$ and $\nu=(0,-1)$ on $F$. In particular, $\partial_{\nu}\psi_{ee}=-\partial_y\psi_{ee}=0$ on $F$. In the following formulas we write $\psi$ for $\psi_{ee}$:
\begin{multline}
0=\int_{\Omega_+}\Delta \psi\,p\cdot\nabla \psi=\int_{\Gamma}\partial_{\nu}\psi\,p\cdot \nabla \psi-\int_{\Omega_+}\nabla \psi\cdot\nabla(p\cdot\nabla \psi)\\
=\mu^2\int_{\Gamma}\psi^2-\int_{\Omega_+}|\nabla \psi|^2-\frac{1}{2}\int_{\Omega_+}\nabla(|\nabla \psi|^2)\cdot p\\
=\mu^2\int_{\Gamma}\psi^2-\frac{1}{2}\int_{\Gamma}|\nabla \psi|^2p\cdot\nu-\frac{1}{2}\int_{F}|\nabla \psi|^2p\cdot\nu\\
=\frac{\mu^2}{2}\int_{\Gamma}\psi^2-\frac{1}{2}\int_{\Gamma}|\nabla_{\Gamma}\psi|^2-\frac{\ell}{2}\int_{F}|\nabla_{\Gamma}\psi|^2.
\end{multline}
We conclude that
\begin{equation}\label{ineq0}
\mu^2\geq\frac{\int_{\Gamma}|\nabla_{\Gamma}\psi|^2}{\int_{\Gamma}\psi^2}.
\end{equation}
Now, $\psi$ is even with respect to $y$ and $\int_{\partial\Omega_{\ell}}\psi=0$, hence $\int_{\Gamma}\psi=0$. Moreover, the second Neumann eigenfunction $v$ on the curve $\Gamma$ is odd with respect to $x$ (in the arclength variable $s$ on $\Gamma$, $v(s)=\cos(\pi s/|\Gamma|$). Hence $\int_{\Gamma}\psi v=0$. Thus
$$
\mu^2\geq\frac{\int_{\Gamma}|\nabla_{\Gamma}\psi|^2}{\int_{\Gamma}\psi^2}\geq\min_{{\substack{0\ne u\in H^1(\Gamma)\\\int_{\Gamma}u=\int_{\Gamma}uv=0}}}\frac{\int_{\Gamma}|\nabla_{\Gamma}u|^2}{\int_{\Gamma}u^2}=\lambda^N_3(\Gamma)=\frac{4\pi^2}{|\Gamma|^2}>1
$$
where $\lambda_3^N(\Gamma)$ is the third Neumann eigenvalue on $\Gamma$ and $|\Gamma|<2\pi$ is the length of $\Gamma$. In conclusion, there are no $\psi_{ee}$ second eigenfunctions.

{\bf Step 3.} Assume  that there exists $\ell_0\in(0,1)$ such that $\mu_3(\ell)=1$ and it is simple for all $\ell\in(0,\ell_0)$, while $\mu_3(\ell_0)=1$ is not simple.  Recall that any eigenfunction associated to $\mu_3(\ell_0)$ has at most three nodal domains from \cite{ks1}. {Suppose that $\mu_3(\ell_0)$ has an eigenfunction $\psi_{oe}$ which is $L^2$ orthogonal to $x$}. Clearly $\psi_{oe}(0,y)=0$ but $x=0$  cannot be the unique nodal line since $\int_{\partial\Omega_{\ell_0}}\psi_{oe}x=0$. Hence we have another nodal line in $\Omega_{\ell_0}\cap\{(x,y)\in\mathbb R^2:x>0\}$ which has non-empty intersection with $\partial\Omega_{\ell_0}$. Hence there are at least two nodal domains in $\Omega_{\ell_0}\cap\{(x,y)\in\mathbb R^2:x>0\}$, but since the eigenfunction is odd in $y$, there are at least two nodal domains also in $\Omega_{\ell_0}\cap\{(x,y)\in\mathbb R^2:x<0\}$, and  this is not possible.

Suppose that $\mu_3(\ell_0)$ has a  $\psi_{eo}$ eigenfunction. Then $\psi_{eo}(x,0)=0$, but since $\psi_{eo}$ is orthogonal to the second eigenfunction in $L^2(\partial\Omega_{\ell_0})$ (which has $y=0$ as unique nodal line), following the same argument above we conclude that there must exist at least other two nodal lines, which are symmetric with respect to the $y$ axis, hence at least four nodal domains, which is not possible.

Finally, suppose that $\mu_3(\ell_0)$ has a $\psi_{oo}$ eigenfunction. Then $x=0$ and $y=0$ are nodal lines. Hence there are at least four nodal domains which is not possible.

\end{proof}

\section*{Acknowledgements} {We thank the anonymous referee for their valuable comments.} The first three authors are partially supported by the group GNAMPA of the Istituto Nazionale di Alta Matematica (INdAM).  In particular, they acknowledge financial support from INdAM-GNAMPA Project 2023, codice CUP E53C2200193000; Project 2024, codice CUP E53C23001670001; Project 2025, codice CUP E5324001950001 and Project 2026, codice CUP E53C25002010001. 
A. DlT. acknowledges financial support from the Spanish Ministry of Science and Innovation (MICINN), through the IMAG-Maria de Maeztu Excellence Grant CEX2020-001105-M/AEI/10.13039/501100011033. She is also supported by the FEDER-MINECO Grants PID2021- 122122NB-I00, PID2020-113596GB-I00 and PID2024-155314NB-I00; RED2022-134784-T, funded by MCIN/AEI/10.13039/501100011033 and by J. Andalucia (FQM-116); Fondi Ateneo - Sapienza Università di Roma; and DFG - Projektnummer: 561401741.
A. DlT. and A. P. are supported by PRIN (Prot. 20227HX33Z). G. M. is supported the PRIN PNRR Project  P2022YFAJH {\sl \lq\lq Linear and Nonlinear PDEs: New directions and applications''} and the PNRR MUR project CN00000013 HUB - National Centre for HPC, Big Data and Quantum Computing (CUP H93C22000450007).
L. P. is member of the group GNSAGA of the Istituto Nazionale di Alta Matematica (INdAM). L. P. acknowledges support from the project {\sl \lq\lq Perturbation problems and asymptotics for elliptic differential equations: variational and potential theoretic methods''} funded by the European Union - Next Generation EU and by MUR Progetti di Ricerca di Rilevante Interesse Nazionale (PRIN) Bando 2022 grant 2022SENJZ3.

\bibliography{biblio}
\bibliographystyle{abbrv}

\end{document}